\title{\bf Wasserstein Convergence Rates for Empirical Measures of Subordinated Processes on Noncompact Manifolds}
\author{Huaiqian Li\footnote{Email: {\color{blue}huaiqianlee@gmail.com}}
\quad Bingyao Wu\footnote{Email: {\color{blue}bingyaowu@163.com}}
  \vspace{2mm}
\\
{\footnotesize Center for Applied Mathematics, Tianjin University, Tianjin 300072, P. R. China}
}
\date{}
\def\R{\mathbb{R}}
\def\E{\mathbb{E}}
\def\P{\mathbb{P}}
\def\Pp{\mathscr{P}}
\def\d{\textup{d}}
\def\<{\langle}
\def\>{\rangle}
\def\Proof.{\noindent{\bf Proof. }}
\newcommand{\vv}{\varepsilon}
\newcommand{\pp}{\partial}
\def\ll{\lambda}
\def\aa{\alpha}
\def\gg{\gamma}
\def\dd{\delta}
\def\nn{\nabla}
\def\C{\mathscr{C}}
\def\W{\mathbb{W}}
\def\e{\text{\rm{e}}}
\def\N{\mathbb{N}}
\def\ff{\frac}
\def\newdot{{\kern.8pt\cdot\kern.8pt}}
\newtheorem{theorem}{Theorem}[section]
\newtheorem{corollary}[theorem]{Corollary}
\newtheorem{example}[theorem]{Example}
\theoremstyle{definition}\newtheorem{remark}[theorem]{Remark}
\begin{document}
\allowdisplaybreaks
\maketitle
\makeatletter 
\renewcommand\theequation{\thesection.\arabic{equation}}
\@addtoreset{equation}{section}
\makeatother 

\begin{abstract}
The asymptotic behaviour of empirical measures has been studied extensively. In this paper, we consider empirical measures of given subordinated processes on complete (not necessarily compact) and connected Riemannian manifolds with possibly nonempty boundary. We obtain rates of convergence for empirical measures to the invariant measure of the subordinated process under the Wasserstein distance. The results, established for more general subordinated processes than [arXiv:2107.11568], generalize the recent ones in [Stoch. Proc. Appl. 144(2022), 271--287] and are shown to be sharp by a typical example. The proof is motivated by the aforementioned works.
\end{abstract}

{\bf MSC 2020:} primary  60D05, 58J65; secondary 60J60, 60J76

{\bf Keywords:} Empirical measure; subordinated process; Wasserstein distance; heat flow; Riemannian manifold

\section{Introduction}\hskip\parindent
Consider a $d$-dimensional complete and  connected Riemannian manifold $M$ with possibly nonempty boundary $\pp M$.
Let $V\in C^1(M)$ and $Z_V:=\int_M \e^{V(x)}\,\d x$ such that $Z_V<\infty$, where $\d x$ denotes the Riemannian volume measure.
Set $\mu(\d x):=Z_V^{-1}\e^{V(x)}\d x$, which is clearly a Borel probability measure on $M$. Let $L^p(\mu)$ be the usual $L^p$ space over $(M,\mu)$ with norm $\|\cdot\|_{L^p(\mu)}$ for every $p\in[1,\infty]$, and let $\Pp$ be the set of all Borel probability measures on $M$.

Let $(X_t)_{t\ge 0}$ be the diffusion process on $M$ corresponding to the infinitesimal generator $L:=\Delta+\nabla V$ with domain $\mathcal{D}(L)$ in $L^2(\mu)$, where $\Delta$ is the Laplace--Beltrami operator on $M$ and $\nabla$ is the Riemannian gradient. If $\pp M\neq \emptyset$, then we assume that $(X_t)_{t\geq0}$ is reflected at $\partial M$ or satisfies the Neumann boundary condition. Let $(P_t)_{t\ge 0}$ be the (Neumann) Markov semigroup or heat flow corresponding to $(L,\mathcal{D}(L))$, where for every bounded measurable function $f$ on $M$,
$$P_tf(x):=\E^x f(X_t),\quad t\ge 0,\, x\in M.$$
Here and in the sequel, $\E^x$ denotes the expectation for the corresponding process with initial point $x\in M$.  It is well known that the diffusion process $(X_t)_{t\geq0}$ is reversible with the stationary distribution $\mu$. In other words, $(P_t)_{t\ge 0}$ is symmetric in $L^2(\mu)$.

In order to introduce the subordinated process, recall that $B$ is a Bernstein function if
$$B\in C^\infty((0,\infty);[0,\infty))\cap C([0,\infty);[0,\infty)),$$
and for each $n\in \N$,
$$(-1)^{n-1}\ff {\d ^n}{\d r^n}B(r)\ge 0,\quad r>0.$$
The following particular class of Bernstein functions is much more interesting, i.e., 
\begin{equation*}\begin{split}
&\mathbf{B}:=\{B: B\mbox{ is a Bernstein function with }B(0)=0,\,B'(0)>0\}.
\end{split}\end{equation*}
For each $B\in\mathbf{B}$, it is well known that  there exists a unique subordinator $(S_t^B)_{t\geq0}$ associated with $B$, i.e., a one-dimensional, increasing process with independent, stationary increments such that $t\mapsto S_t^B$ is continuous in probability and $S_0^B=0$, characterized by the Laplace transform
\begin{equation}\label{BL}
\E\e^{-\ll S_t^B}=\e^{-tB(\ll)},\quad t,\ll\ge 0.
\end{equation}
Moreover, we will need the following two classes of Bernstein functions, i.e.,
$$
\mathbf{B}^\aa:=\Big\{B\in\mathbf{B}: \liminf_{\ll\to\infty}\ff{B(\ll)}{\ll^{\aa}}>0\Big\},\quad
\mathbf{B}_\aa:=\Big\{B\in\mathbf{B}: \limsup_{\ll\to\infty}\ff{B(\ll)}{\ll^{\aa}}<\infty\Big\},
$$
where $\aa\in[0,1]$. Note that, unlike \cite{WangWu}, any $B$ from $\mathbf{B}^\aa$ or $\mathbf{B}_\aa$ is not required to satisfy
\begin{equation}\label{B-int}
\int_1^\infty r^{d/2-1}\e^{-t B(r)}\,\d r<\infty,\quad t>0.
\end{equation}
However, in some occasions, we will assume the ultra-contractivity of $(P_t^B)_{t>0}$, which implies \eqref{B-int} in certain situations; see
\eqref{UC} and Remark \ref{rk-UC}(ii) below.
\begin{remark}
Let $\aa\in[0,1]$. Recently, in \cite{WangWu}, the following classes of Bernstein functions are defined, i.e.,
$$\mathbb{B}:=\{B\in\mathbf{B}: B\mbox{ satisfies }\eqref{B-int}\},$$
and
$$\mathbb{B}^\aa:=\Big\{B\in\mathbb{B}: \liminf_{\ll\to\infty}\ff{B(\ll)}{\ll^{\aa}}>0\Big\},\quad
\mathbb{B}_\aa:=\Big\{B\in\mathbb{B}: \limsup_{\ll\to\infty}\ff{B(\ll)}{\ll^{\aa}}>0\Big\}.$$
Note that by the elementary inequality \eqref{LB} below, it is immediate to prove that  $\mathbf{B}^\aa=\mathbb{B}^\alpha$, $\aa\in(0,1]$. However, for any $\lambda\geq0$, letting
$$B_1(\lambda):=1-(1+\lambda)^{\alpha-1}, \quad\alpha\in[0,1),$$
and
$$B_2(\lambda):=\frac{\lambda}{1+\lambda},$$
 we can easily check that
  \begin{itemize}
  \item[(1)] $B_1\in\mathbf{B}_\alpha$ for all $\alpha\in[0,1)$, $B_1\in\mathbf{B}^0$, and $B_1\notin\mathbb{B}$;

  \item[(2)] $B_2\in\mathbf{B}_\alpha$ for all $\alpha\in[0,1]$, $B_2\notin\mathbb{B}$, $B_2\in \mathbf{B}^0$,  and  $B_2\notin \mathbf{B}^\alpha$ for every $\alpha\in(0,1]$.
\end{itemize}
Due to this, we conclude that $\mathbb{B}_\alpha\varsubsetneq\mathbf{B}_\alpha$ for every $\alpha\in[0,1]$. For other examples, refer to \cite[Chapter 16]{SSV2012}.
\end{remark}

For every $B\in\mathbf{B}$, let $(X_t^B)_{t\geq0}$ with $X_t^B:=X_{S_t^B}$ be the Markov process on $M$, where $(S_t^B)_{t\ge 0}$ is a stable process as above,  independent of $(X_t)_{t\ge 0}$, satisfying \eqref{BL}.  We call $(X_t^B)_{t\geq0}$ the $B$-subordinated process to $(X_t)_{t\geq0}$.
Let $(P_t^B)_{t\geq0}$ be the Markov semigroup or heat flow corresponding to $(X_t^B)_{t\ge 0}$. It is well known that the infinitesimal generator of $(X_t^B)_{t\geq0}$ is $B(-L)$; see e.g. \cite[Chapter 13.2]{SSV2012} and \cite{Phi}. Note that, in particular, if $M=\R^d$, $\aa\in(0,1)$ and  $B(t)=t^\aa$,  then $(X_t^B)_{t\geq0}$ is the well known $2\aa$-stable process. See e.g. \cite{Bertoin97,SSV2012} for a comprehensive study on Bernstein functions and subordinated processes.

For every $B\in\mathbf{B}$, let us define the empirical measures associated with the $B$-subordinated process $(X_t^B)_{t\geq0}$, i.e.,
$$\mu_t^B:=\ff 1 t \int_0^t\delta_{X_s^B}\,\d s,\quad t>0,$$
where $\delta_\cdot$ is the Dirac measure.

We use $\rho$ to denote the Riemannian distance  on $M$. For every $p\in(0,\infty)$, the $L^p$-Wasserstein (or Kantorovich) distance $\W_p$ is the pseudo-distance between two probability measures on $M$ induced by $\rho$, i.e.,
$$\W_p(\mu_1,\mu_2):=\inf_{\pi\in\C(\mu_1,\mu_2)}\left(\int_{M\times M}\rho(x,y)^p\, \pi(\d x,\d y)\right)^{\ff 1{p\vee 1}},\quad \mu_1,\mu_2\in\Pp,$$
where $\C(\mu_1,\mu_2)$ is the set of all Borel probability measures on the product space $M\times M$ whose marginal distributions are $\mu_1$ and $\mu_2$, respectively. Each probability measure $\pi$ from $\C(\mu_1,\mu_2)$  is also called a coupling measure or coupling of $\mu_1$ and $\mu_2$. More accurately, $\W_p$ with $p\in(0,1)$ should be called Zolotarev distance (see \cite{Zol}).  Refer to \cite[Chapter 7]{Vi2003} and \cite[Chapter 5]{ChenMF2004} for further details on the $L^p$-Wasserstein distance.

On the one hand, the study of large time behaviours of empirical measures is important. It is well known that if a Markov process on some Polish space is stationary and ergodic, then by the strong law of large numbers, almost surely, the empirical measures associated with the process goes to the unique invariant measure weakly as $t\to\infty$.  It is an interesting and always challenging problem to quantify this kind of large time behaviours. On the other hand, the archetypal $\alpha$-stable process ($\alpha\in(0,2)$), a particular L\'{e}vy process or L\'{e}vy flight, has been investigated intensively in various areas. From an applied perspective, the $\alpha$-stable process or the fractional Laplacian has been widely employed to model the real-world phenomena, in particular those exhibiting discontinuous paths or having heavy-tailed distributions, from physics to finance, see e.g. \cite{KA,SW} and references therein.

In the present work, we mainly aim to  study  the rate of convergence of $\mu_t^B$ to $\mu$ under the $L^2$-Wasserstein distance on average, i.e., $\E[\W_2(\mu_t^B,\mu)]$,  for large enough $t$. On this topic, a series of works has appeared recently. We briefly mention them here. In the particular $B(r)=r$ case,  when $M$ is compact, see \cite{WangZhu} and see \cite{LW1,LW2,Wang20201,Wang20211} for further investigations on the case of conditional empirical measures associated with (subordinated) diffusion processes absorbed at the boundary $\partial M$, while when $M$ is not necessarily compact, refer to \cite{Wang20202}. In the case of more regular $B$-subordinated processes mentioned above, when $M$ is assumed to be compact, see the very recent work \cite{WangWu}. See also \cite{Wang20212} in the setting of semilinear stochastic partial differential equations. We should mention that, when $M$ is compact, besides rates of convergence, precise limits are obtained in the main results of the aforementioned papers \cite{WangZhu,Wang20201,Wang20211,WangWu,LW1,LW2}.

In the sequel,  we use $\E^\nu$ and $\P^\nu$ to denote the  expectation and probability measure for the corresponding process with initial distribution $\nu\in\Pp$, respectively. For every $\nu\in \Pp$ and every $t\ge 0$, let $\nu P_t:=\P^\nu(X_t\in\cdot)$ be the distribution of $X_t$ with initial distribution $\nu$. Let $\P_{S_t^B}$ denote the distribution of $S_t^B$. For every $r>0$ and every $x\in M$, $B(x,r)$ denotes the open ball in $M$ with radius $r$ and center $x$.

The rest of this paper is organized as follows. In Section 2, we introduce our main results. In Section 3, an example is given to illustrate the main results. Proofs of the main results and the example are presented in Sections 4, 5 and 6, respectively. An appendix is also included.

\section{Preparations and Main results}\hskip\parindent
In this section, we present the main results. For this purpose, we should introduce further notions and notations which will be frequently used below.

Let $(p_t)_{t>0}$ be the heat kernel of $(P_t)_{t>0}$ w.r.t. $\mu$. Set 
\begin{equation}\label{DHU}
\gg(t):=\int_M p_t(x,x)\,\mu(\d x)<\infty,\quad t>0,
\end{equation}
which will be in force throughout  the paper. It is pointed out in \cite[page 272]{Wang20202} that,  as the result shown in \cite[Theorem 3.3]{Wang2000} or \cite[Theorem 3.3.19]{Wang2005}, by the representation formula on the heat kernel $(p_t)_{t>0}$ (see \eqref{HK} below),
\eqref{DHU} is equivalent to that $L$ has empty essential spectrum  such that,  all the eigenvalues of $-L$ counting multiplicities, listed in
increasing order, denoted by  $\{\ll_i\}_{i\ge 0}$,  are nonnegative and satisfy that $\lambda_i\rightarrow\infty$ as $i$ tends to $\infty$ and
$$\sum_{i=0}^\infty\e^{-\ll_i t}<\infty,\quad t>0.$$
\begin{remark}\label{remark-1}
In order to guarantee that $L$ has only discrete spectrum, it is sufficient to assume that $\gamma(t_0)<\infty$ for some $t_0>0$. For the proof, see Appendix.
\end{remark}

Assume \eqref{DHU}. Since $M$ is connected, $\lambda_0$, which equals zero, is the algebraically simple and isolated eigenvalue of $-L$. 
Consequently, $L$ has a spectral gap, i.e.,
\begin{equation}\label{SG}
\ll_1:=\inf\big\{\mu(|\nn f|^2):f\in\mathcal{D}(L),\,\mu(f)=0,\,\mu(f^2)=1\big\}>0.
\end{equation}
It is well known that \eqref{SG} is equivalent to the Poincar\'{e} inequality (see e.g. \cite[Theorem 1.1]{Wang2005}), i.e.,
$$\|P_tf-\mu(f)\|_{L^2(\mu)}\leq \e^{-\lambda_1 t}\|f-\mu(f)\|_{L^2(\mu)},\quad t\geq0,\,f\in L^2(\mu).$$
Here and in what follows, we write $\mu(f)$ as the shorthand notation for $\int_M f\,\d\mu$.

For every $\vv>0$, let
\begin{equation*}\label{DE}
\dd(\vv):=\E^\mu[\rho(X_0,X_\vv)^2]=\int_{M \times M}\rho(x,y)^2p_\vv(x,y)\,\mu(\d x)\mu(\d y).
\end{equation*}
Let $\alpha\in[0,1]$. For every
$\vv\in(0,1]$, define
\begin{equation*}\label{BE}
\eta^\aa(\vv):=1+\int_\vv^1 \gg(u)u^\aa\,\d u. 
\end{equation*}
Since $(0,\infty)\ni t\mapsto p_t(x,x)$ is decreasing for every $x\in M$ (see \eqref{A1} below),
it is clear that $\gg(t)$ is decreasing in $t$.  Under \eqref{DHU},  it is easy to see that, $\eta^\aa(\vv)<\infty$, $\vv\in(0,1]$.

Let $B\in\mathbf{B}$. Recall  that the semigroup $(P_t^B)_{t>0}$ is said to be  ultra-contractive if
\begin{equation}\label{UC}
\|P_t^B\|_{L^1(\mu)\to L^\infty(\mu)}:=\sup_{\|f\|_{L^1(\mu)}\le 1}\|P_t^B f\|_{L^\infty(\mu)}<\infty,\quad t>0.
\end{equation}
A classic result asserts that, \eqref{UC} is equivalent to that $(P_t^B)_{t>0}$ has a heat kernel $(p_t^B)_{t>0}$ and
$$\|P_t^B\|_{L^1(\mu)\to L^\infty(\mu)}=\sup_{x,y\in M}p_t^B(x,y)<\infty,\quad t>0;$$
see e.g. \cite{CKS87} or \cite[Section 2.1]{Davies89}. It is clear that, in the particular case when $B(t)=t$, \eqref{UC} is read as the ultra-contractivity of $(P_t)_{t>0}$, which implies \eqref{DHU} since $\mu$ is a finite measure.
\begin{remark}\label{rk-UC}
Let $B\in \mathbf{B}$.
\begin{itemize}
\item[(i)] It is proved in \cite[page 15]{WangWu} that, if $B$ satisfies \eqref{B-int} and there exists a constant $c>0$ such that $\|P_t\|_{L^1(\mu)\to L^\infty(\mu)}\leq c(1+t^{-d/2})$ for every $t>0$, then $(P_t^B)_{t>0}$ is ultra-contractive.

\item[(ii)] In general, the ultra-contractivity of $(P_t^B)_{t>0}$ implies additional regularity of $B$. For instance, if \eqref{UC} holds and there exists a constant $c>0$ such that, for some $x\in M$,
\begin{equation}\label{DL}
p_t(x,x)\geq \frac{c}{t^{d/2}},\quad t>0,
\end{equation}
then $B$ satisfies \eqref{B-int}. Indeed, by the mutual independence of $(X_t)_{t\geq0}$ and $(S_t^B)_{t\geq0}$, Fubini's theorem and \eqref{frac-eq} below,
\begin{equation*}\begin{split}
\infty>p^B_t(x,x)&=\int_{[0,\infty)}p_s(x,x)\,\mathbb{P}_{S_t^B}(\d s)\geq c\mathbb{E}\big[(S_t^B)^{-d/2}\big]\\
&=\frac{c}{\Gamma(\frac{d}{2})}\E\Big(\int_0^\infty \e^{-r S_t^B}r^{d/2-1}\,\d r\Big)\\
&=\frac{c}{\Gamma(\frac{d}{2})}\int_0^\infty \e^{-t B(r)}r^{d/2-1}\,\d r,
\quad t>0.
\end{split}\end{equation*}
We are aware that \eqref{DL} can be guaranteed by \cite[Theorem 7.2]{CouGri}. More precisely, if there exists a constant $C>0$ such that, for some $x\in M$,
$$\mu\big(B(x,2r)\big)\leq C\mu\big(B(x,r)\big),\quad r>0,$$
and
$$p_t(x,x)\leq\frac{C}{t^{d/2}},\quad t>0,$$
then \eqref{DL} holds.
\end{itemize}
\end{remark}

For every number $a>0$, let $\Pp_a$ be the subclass of $\Pp$ such that each element of $\Pp_a$ is absolutely continuous w.r.t. $\mu$ with the Radon--Nikodym derivative bounded by $a$; more precisely,
\begin{equation}\label{Pp_a}
\Pp_a:=\{\nu\in\Pp: \nu=h_\nu\mu,\,\|h_\nu\|_\infty\le a\}.
\end{equation}
Here and in the sequel, $\|\cdot\|_\infty$ denotes the supremum norm.

Now we begin to present the main results of the paper. The first one is on upper bound estimates.
\begin{theorem}\label{TH2.1}
 Assume that \eqref{DHU} holds.
\begin{itemize}
\item[\textnormal{(i)}] Let $B\in\mathbf{B}$. Then, for every $k\ge 1$,
\begin{equation}\label{UB1}
\limsup_{t\to\infty}\Big\{t\sup_{\nu\in \Pp_k}\E^\nu[\W_2(\mu_t^B,\mu)^2]\Big\}\le \sum_{i=1}^\infty\ff 8 {\ll_iB(\ll_i)}.
\end{equation}
\item[\textnormal{(ii)}] Let $B\in \mathbf{B}^\aa$ for some $\aa\in[0,1]$. Then, there exists a constant $c>0$ such that
\begin{equation}\label{UB2}
\sup_{\nu\in\Pp_k}\E^\nu[\W_2(\mu_t^B,\mu)^2]\le ck\inf_{\vv\in(0,1]}\big[\dd(\vv)+t^{-1}\eta^\aa(\vv)\big],\quad t,k\ge 1.
\end{equation}
\item[\textnormal{(iii)}] Let $B\in\mathbf{B}$.  If $(P_t^B)_{t>0}$ is ultra-contractive, then for every $\nu\in\Pp$ satisfying
\begin{equation}\label{CUUB1}
\int_0^1\!\! \int_M\E^\nu[\rho(x,X_t^B)^2]\,\mu(\d x) \,\d t <\infty,
\end{equation}
it holds that
\begin{equation}\label{UUB1}
\limsup_{t\to\infty}\left\{t\E^\nu[\W_2(\mu_t^B,\mu)^2]\right\}\le\sum_{i=1}^\infty\ff 8 {\ll_i B(\ll_i)}.
\end{equation}
\item[\textnormal{(iv)}] Let $B\in \mathbf{B}^\aa$ for some $\aa\in[0,1]$. If $(P_t^B)_{t>0}$ is ultra-contractive, then there exists a constant $C>0$ such that, for every $\nu\in \Pp$ and for all $t\ge 1$,
\begin{equation}\label{UUB2}
\E^\nu[\W_2(\mu_t^B,\mu)^2]\le C\left\{\ff 1 t \int_0^1 \E^\nu\left[\mu\big(\rho(X_s^B,\cdot)^2\big)\right]\,\d s+\inf_{\vv\in(0,1]}\big[\dd(\vv)+t^{-1}\eta^\aa(\vv)\big]\right\}.
\end{equation}
\end{itemize}
\end{theorem}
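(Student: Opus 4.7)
The plan is to reduce the theorem to part (ii) by using ultra-contractivity of $(P_t^B)_{t>0}$ to smooth the arbitrary initial distribution $\nu$ after one unit of time, and to control the resulting short-time contribution by a direct product coupling.

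For $t\ge 2$, I would decompose
$$\mu_t^B \;=\; \tfrac{1}{t}\,\nu_1 \;+\; \tfrac{t-1}{t}\,\widetilde\mu_{t-1}^B,\qquad
\nu_1 := \int_0^1 \delta_{X_s^B}\,\d s,\quad \widetilde\mu_{t-1}^B := \tfrac{1}{t-1}\int_1^t \delta_{X_s^B}\,\d s,$$
both of which are probability measures on $M$. Convexity of $\W_2^2(\cdot,\mu)$ yields
$$\W_2(\mu_t^B,\mu)^2 \;\le\; \tfrac{1}{t}\,\W_2(\nu_1,\mu)^2 + \tfrac{t-1}{t}\,\W_2(\widetilde\mu_{t-1}^B,\mu)^2.$$
The first summand is handled by the product coupling $\pi(\d x,\d y):=\int_0^1\delta_{X_s^B}(\d x)\,\mu(\d y)\,\d s\in\C(\nu_1,\mu)$, which gives $\W_2(\nu_1,\mu)^2\le\int_0^1\mu(\rho(X_s^B,\cdot)^2)\,\d s$; taking $\E^\nu$ reproduces the first term in \eqref{UUB2} up to the factor $1/t$.

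For the second summand, the Markov property of $(X_t^B)_{t\ge 0}$ at time $1$ identifies the conditional law of $\widetilde\mu_{t-1}^B$ given $X_1^B$ as the law of $\mu_{t-1}^B$ under $\P^{X_1^B}$, so
$$\E^\nu\bigl[\W_2(\widetilde\mu_{t-1}^B,\mu)^2\bigr] = \E^{\nu P_1^B}\!\bigl[\W_2(\mu_{t-1}^B,\mu)^2\bigr].$$
The key smoothing step is provided by ultra-contractivity: the boundedness of the kernel $p_1^B$ gives
$$\frac{\d(\nu P_1^B)}{\d\mu}(y) = \int_M p_1^B(x,y)\,\nu(\d x)\le k_0:=\|P_1^B\|_{L^1(\mu)\to L^\infty(\mu)}<\infty,$$
so $\nu P_1^B\in\Pp_{k_0}$. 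Applying part (ii) to this initial law at the shifted time $t-1\ge 1$, and using $(t-1)^{-1}\le 2/t$, then produces the desired bound for $t\ge 2$ with constant of order $ck_0$.

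The residual range $t\in[1,2)$ is handled by the global product coupling $\W_2(\mu_t^B,\mu)^2\le t^{-1}\int_0^t\mu(\rho(X_s^B,\cdot)^2)\,\d s$, splitting the integral at $1$: the $[0,1]$ piece matches the first term of \eqref{UUB2}, while the $[1,t]$ piece is absorbed into $\inf_\vv[\dd(\vv)+t^{-1}\eta^\aa(\vv)]$ by combining $\|P_s^B\|_{L^1(\mu)\to L^\infty(\mu)}\le k_0$ for $s\ge 1$ (a consequence of $L^1$-contractivity and the semigroup property) with the elementary bound $\inf_\vv[\dd(\vv)+t^{-1}\eta^\aa(\vv)]\ge 1/t\ge 1/2$ (since $\eta^\aa(\vv)\ge 1$), at the cost of enlarging $C$. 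The main obstacle is the smoothing step itself: one must verify that ultra-contractivity pushes every $\nu\in\Pp$ into $\Pp_{k_0}$ after unit time uniformly in $\nu$, so that part (ii) applies with a universal density bound. Once this is in place, the remaining argument is a routine combination of $\W_2^2$-convexity, a product coupling, and the Markov property.
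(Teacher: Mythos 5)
Your proposal addresses only part (iv) of the theorem, taking part (ii) as a black box. The bulk of the theorem, namely parts (i)--(iii), is left untouched, and this is where the real work lies: the paper proves (i) and (ii) by regularizing $\mu_t^B$ through the heat flow, $\mu_{\vv,t}^B:=\mu_t^B P_\vv$, applying the Ledoux-type Wasserstein upper bound $\W_2(f\mu,\mu)^2\le 4\mu(|\nn(-L)^{-1}(f-1)|^2)$ to the density $f_{\vv,t}^B$, and carrying out a careful spectral estimate of $\E^\mu[|\xi_i^B(t)|^2]$ combined with the lower bound $B(\ll)\ge\kappa(\ll\wedge\ll^\aa)$. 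Part (iii) then uses (i) together with a shift argument similar in spirit to what you did for (iv). None of this appears in your write-up, so as a proof of the full statement there is a substantial missing piece.

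Regarding the part you do treat: your reduction of (iv) to (ii) is in the same spirit as the paper's --- use ultra-contractivity after time $1$ to force the law of $X_1^B$ into $\Pp_{k_0}$, handle the initial unit of time by a product coupling. However, you decompose $\mu_t^B=\tfrac1t\nu_1+\tfrac{t-1}{t}\tilde\mu_{t-1}^B$ and invoke convexity of $\W_2^2(\cdot,\mu)$, which forces you to apply (ii) at the shifted time $t-1$; this only works for $t\ge 2$, leaving a residual range $t\in[1,2)$. The paper avoids this entirely by comparing $\mu_t^B$ to the shifted occupation measure over a window of the \emph{same} length, $\bar\mu_t^B=\tfrac1t\int_1^{t+1}\delta_{X_s^B}\d s$, via the explicit coupling $\pi=\tfrac1t\int_0^1\delta_{(X_s^B,X_{s+t}^B)}\d s+\tfrac1t\int_1^t\delta_{(X_s^B,X_s^B)}\d s$, so that (ii) applies at time $t$ directly for every $t\ge 1$.

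Your handling of the residual range also needs care: to absorb the $[1,t]$ piece into $\inf_\vv[\dd(\vv)+t^{-1}\eta^\aa(\vv)]$ you bound $\E^\nu[\mu(\rho(X_s^B,\cdot)^2)]$ by $k_0\int_M\int_M\rho(z,y)^2\,\mu(\d z)\mu(\d y)$, and this double integral is not assumed to be finite on a noncompact manifold. The issue is recoverable --- if the first term on the right of \eqref{UUB2} is finite then $z\mapsto\int_M\rho(z,y)^2\mu(\d y)$ is finite at some (hence every) point, which by the triangle inequality forces the finiteness of the double integral, and if it is infinite there is nothing to prove --- but you should state this explicitly rather than treat the absorption as elementary.
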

\begin{remark}\label{rk-thm1}
In Theorem \ref{TH2.1}, we do not make any assumptions on the boundary of $M$.
\end{remark}

A corollary of Theorem \ref{TH2.1}, which demands some curvature condition, is presented next. We consider the empty boundary case, which is enough for our purpose. For every $f\in C^2(M)$, denote the Hessian of $f$ by ${\rm Hess}_f$. Let ${\rm Ric}$ be the Ricci curvature of $M$.
\begin{corollary}\label{COR2.2}
Let $K_1,K_2\geq0$, $\aa\in[0,1]$ and $B\in \mathbf{B}^\aa$. Suppose that $\partial M$ is empty. Let $V=V_1+V_2$ for some functions $V_1\in C^2(M)$ and $V_2\in C^1(M)$  such that
\begin{equation}\label{Ric}
{\rm Ric}\ge -K_1, \quad {\rm Hess}_{V_1}\leq K_1, \quad \|\nn V_2\|_\infty\le K_2.
\end{equation} For any $t,\vv>0$, let
$$\tilde{\gg}(t):=\int_M\ff{\mu(\d x)}{\mu\big(B(x,\sqrt{t})\big)},\quad\quad \tilde{\eta}^\aa(\vv):=1+\int_\vv^1\tilde{\gg}(u)u^\aa\,\d u.$$
If \eqref{DHU} holds, then there exists a constant $c>0$ such that,  for every $t,k\ge 1$,
\begin{equation*}\label{UB3}
\sup_{\nu\in\Pp_k}\E^\nu[\W_2(\mu_t^B,\mu)^2]\le ck\inf_{\vv\in(0,1]}\left\{\big[1+\mu(|\nabla V|^2)\big]\vv + \frac{1}{t}\tilde{\eta}^\aa(\vv)\right\}.
\end{equation*}
\end{corollary}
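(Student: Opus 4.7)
The plan is to invoke Theorem \ref{TH2.1}(ii) and then reduce the corollary to two separate short-time estimates: an upper bound on the short-time displacement $\dd(\vv)$, and an upper bound on the on-diagonal heat kernel that controls $\gg(t)$ (and hence $\eta^\aa(\vv)$) by its volume-based analogue $\tilde\gg(t)$ (resp.\ $\tilde\eta^\aa(\vv)$). The corollary then follows from substitution into \eqref{UB2} and minimisation over $\vv\in(0,1]$.

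For the heat-kernel estimate, the splitting $V=V_1+V_2$ together with $\Ric\ge -K_1$ and $\Hess_{V_1}\le K_1$ provides the Bakry--\'Emery curvature--dimension condition $\CD(-2K_1,\infty)$ for the operator $L_1:=\Delta+\nn V_1$. Combined with the Bishop--Gromov volume comparison coming from $\Ric\ge -K_1$, this yields the classical Gaussian-type upper bound
$$p_t^{(1)}(x,y)\le \ff{C\e^{Ct}}{\sqrt{\mu(B(x,\sqrt{t}))\,\mu(B(y,\sqrt{t}))}},\qquad t>0,$$
for the heat kernel of $L_1$. The remaining drift $\nn V_2$ with $\|\nn V_2\|_\infty\le K_2$ is then absorbed either through a Girsanov change of measure (whose density is bounded in $L^p$ uniformly on $t\in(0,1]$ since the quadratic variation is at most $K_2^2 t$), or through Davies' method for Gaussian bounds under a bounded drift; the resulting heat kernel $p_t$ satisfies $p_t(x,x)\le C/\mu(B(x,\sqrt{t}))$ for $t\in(0,1]$. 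Integrating against $\mu$ gives $\gg(t)\le C\tilde\gg(t)$, and hence $\eta^\aa(\vv)\le C\tilde\eta^\aa(\vv)$ for $\vv\in(0,1]$.

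For the displacement estimate, I would apply It\^o's formula to $\rho(X_0,\cdot)^2$ along $(X_t)_{t\ge 0}$, treating the cut-locus singularity via the Kendall--Cranston smooth approximation. The Laplacian comparison under $\Ric\ge -K_1$ bounds the radial second-order part by $C(1+\rho)$, while the drift contribution $\nn V\cdot \nn\rho^2$ is controlled by Cauchy--Schwarz. Taking $\E^\mu$, using the invariance of $\mu$ to rewrite $\E^\mu|\nn V(X_s)|^2=\mu(|\nn V|^2)$, yields an integral inequality of the form
$$\dd(\vv)\le C\vv + C\,\mu(|\nn V|^2)^{1/2}\int_0^\vv \dd(s)^{1/2}\,\d s,\qquad \vv\in(0,1],$$
from which Bihari's inequality (or a direct bootstrap) delivers $\dd(\vv)\le C[1+\mu(|\nn V|^2)]\vv$.

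The main technical obstacle I anticipate is ensuring that both ingredients survive on a possibly non-doubling noncompact manifold: specifically, the Gaussian heat-kernel bound for $L_1$ under $\CD(-2K_1,\infty)$ needs to be extracted carefully (without global volume doubling), and the Girsanov absorption of $\nn V_2$ must preserve the short-time bound with constants depending only on $K_1,K_2$. The boundedness of $\nn V_2$ keeps the stochastic exponential uniformly $L^p$-bounded on $(0,1]$, so this step is routine; likewise the Laplacian comparison and stationarity arguments for $\dd(\vv)$ are standard. Once both short-time estimates are in place, the desired bound follows by inserting them into \eqref{UB2} and taking the infimum over $\vv\in(0,1]$.
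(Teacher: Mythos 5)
Your proposal follows essentially the same route as the paper: both invoke Theorem~\ref{TH2.1}(ii), bound $\gg(t)\le c\,\tilde\gg(t)$ by an on-diagonal heat-kernel upper bound of Gong--Wang type so that $\eta^\aa(\vv)\le c\,\tilde\eta^\aa(\vv)$, and bound $\dd(\vv)\le c[1+\mu(|\nn V|^2)]\vv$ via It\^o's formula and the Laplacian comparison under $\Ric\ge-K_1$. The only differences are cosmetic: the paper cites the heat-kernel estimate directly from \cite[(2.17)]{Wang20202} (ultimately \cite[Lemma~2.3]{GW2001}) rather than rederiving it via $\CD(-2K_1,\infty)$ plus Girsanov/Davies, and it linearizes the displacement integral inequality with Young's inequality followed by Gr\"onwall's lemma rather than Cauchy--Schwarz plus Bihari, though both variants yield the same linear-in-$\vv$ bound.
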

\begin{remark}\label{rk-cor}
Note that, in Corollary \ref{COR2.2}, $\mu(|\nabla V|^2)=\infty$ is allowed. Note also that, it is not necessary to require that $\tilde{\gg}(t)$ is finite for every $t>0$. However, if in addition we have the on-diagonal lower bound estimate of the heat kernel, i.e.,
$$p_t(x,x)\geq\frac{a_t}{\mu\big(B(x,\sqrt{t})\big)},\quad x\in M,\,t>0,$$
for some function $a:(0,\infty)\rightarrow(0,\infty)$, then \eqref{DHU} is equivalent to $\tilde{\gg}(t)<\infty$ for all $t>0$. Indeed, it is clear from the former to the later, and the inverse implication is deduced from \cite[Lemma 2.3]{GW2001} which implies that, under assumption \eqref{Ric}, there exists some constant $c>0$ such that
$$p_t(x,x)\leq\frac{c\e^{ct}}{\mu\big(B(x,\sqrt{t})\big)},\quad x\in M,\,t>0.$$
\end{remark}

In order to present results on lower bound estimates, we should introduce the truncated $L^p$-Wasserstein distance as \cite{Wang20202}. Let $p\in(0,\infty)$. For every $\nu_1,\nu_2\in\Pp$, let
$$\tilde{\W}_p(\nu_1,\nu_2):=\inf_{\pi\in\C(\nu_1,\nu_2)}\left(\int_{M\times M}\big[1\wedge \rho(x,y)\big]^p\,\pi(\d x,\d y)\right)^{\frac{1}{p\vee 1}}.$$
Obviously, $\tilde{\W}_1\le\W_1\le\W_2$ as functions on $\Pp\times\Pp$.  Recall that the boundary $\partial M$ is said to be convex if the second fundamental form is nonnegative definite.
\begin{theorem}\label{TH3.1}
\textnormal{(1)} Let $B\in \mathbf{B}$. Then there exists a constant $c>0$ such that
\begin{equation}\label{LB1}
\E^\mu[\tilde{\W}_1(\mu_t^B,\mu)]\ge c t^{-\frac{1}{2}},\quad t\ge 1.
\end{equation}
Moreover, if \eqref{SG} holds, then for every $\nu\in\Pp$,
\begin{equation}\label{LB2}
\liminf_{t\to\infty}\big\{t\E^{\nu}[\tilde{\W}_1(\mu_t^B,\mu)^2]\big\}>0.
\end{equation}
\textnormal{(2)} Assume that $\pp M$ is either empty or convex.
\begin{itemize}
\item[(i)] Let $B\in \mathbf{B}_\aa$ for some $\aa\in[0,1]$ and $0<p<\aa$.  Suppose that $\mu(|\nn V|)<\infty$ and
\begin{equation}\label{LRIC}
\textnormal{Ric}\ge -K,\quad V\le K,
\end{equation}
for some constant $K>0$. If $d> 2\aa$, then there exists a constant $c>0$ such that
\begin{equation}\label{LB3}
\inf_{\nu\in\Pp_k}\E^\nu[\tilde{\W}_1(\mu_t^B,\mu)]\ge\Big\{\inf_{\nu\in\Pp_k}\E^\nu[\tilde{\W}_p(\mu_t^B,\mu)]\Big\}^{\ff 1 p}
\ge ck^{-\ff {2\aa}{p(d-2\aa)}}t^{-\ff 1{d-2\aa}},\quad k,t\ge 1.
\end{equation}
\item[(ii)] Let $B\in \mathbf{B}$ and $V\in C^2(M)$. Suppose that $(P_t)_{t>0}$ is ultra-contractive and $\textnormal{Ric}-\textnormal{Hess}_V\ge K$ for some constant $K\in\R$. Then
\begin{equation}\label{LB4}
\liminf_{t\to\infty}\inf_{\nu\in\Pp}\big\{t\E^\nu[\W_2(\mu_t^B,\mu)^2]\big\}\ge \sum_{i=1}^\infty \ff 2 {\ll_i B(\ll_i)}.
\end{equation}
\end{itemize}
\end{theorem}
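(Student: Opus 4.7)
The proof naturally splits into three independent parts, each converting a spectral identity for the additive functional $\int_0^t f(X_s^B)\,\d s$ into a Wasserstein lower bound.

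For Part (1), I would use the Kantorovich--Rubinstein dual representation of $\tilde{\W}_1$: for any Lipschitz $f$ with $|f(x)-f(y)|\le 1\wedge\rho(x,y)$,
\begin{equation*}
\tilde{\W}_1(\mu_t^B,\mu)\ge|\mu_t^B(f)-\mu(f)|.
\end{equation*}
Choose $f$ as a bounded truncation of the first nontrivial eigenfunction of $-L$, and use the $L^2(\mu)$-symmetry of $(P_s^B)_{s\ge 0}$ together with stationarity to obtain
\begin{equation*}
\E^\mu\big[(\mu_t^B(f)-\mu(f))^2\big]=\frac{2}{t^2}\int_0^t(t-s)\int_M(f-\mu(f))\,P_s^B(f-\mu(f))\,\d\mu\,\d s.
\end{equation*}
A spectral expansion against the eigenbasis $\{e_i\}$ gives a right-hand side of order $c/t$ for large $t$, whence \eqref{LB1} follows by Jensen. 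For \eqref{LB2}, under \eqref{SG} the Poincaré inequality lets the bias from a general $\nu\in\Pp$ decay exponentially, so the stationary-case lower bound governs the $\liminf$.

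For Part (2)(i), I would use a ball-partition / discrepancy argument. Under \eqref{LRIC}, volume doubling and a Gaussian heat-kernel upper bound yield a maximal family of disjoint balls $\{B(x_j,r)\}_j$ with $\mu(B(x_j,r))\gtrsim r^d$. Choose smooth bumps $\varphi_j$ supported in $B(x_j,r)$ with $\|\varphi_j\|_\infty\le 1$ and with $r\varphi_j$ being $1$-Lipschitz, so Kantorovich duality gives
\begin{equation*}
\tilde{\W}_p(\mu_t^B,\mu)^p\gtrsim r^p\sum_j|\mu_t^B(\varphi_j)-\mu(\varphi_j)|.
\end{equation*}
The assumption $B\in\mathbf{B}_\aa$ with $d>2\aa$ controls the Green function of $(X_t^B)$ and yields a variance bound of order $r^{2d-2\aa}/t$ per summand, uniformly in $\nu\in\Pp_k$ via the domination $\nu\le k\mu$. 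A Paley--Zygmund / fourth-moment step then converts variance into an $L^1$ lower bound on each $|\mu_t^B(\varphi_j)-\mu(\varphi_j)|$, and summing in $j$ followed by optimising $r=r(t,k)$ produces the rate $k^{-2\aa/(p(d-2\aa))}t^{-1/(d-2\aa)}$.

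For Part (2)(ii), the aim is a sharp spectral lower bound matching Theorem \ref{TH2.1}(i). The curvature condition $\Ric-\Hess_V\ge K$ supplies the Bakry--Émery gradient estimate $|\nn P_s f|^2\le\e^{-2Ks}P_s|\nn f|^2$. Combining this with a smoothing step $\mu_t^B\mapsto(\mu_t^B)P_\vv^B$ and the cost estimate $\W_2(\mu_t^B,(\mu_t^B)P_\vv^B)^2\le\dd(\vv)$ reduces the lower bound to an $\dot{H}^{-1}(\mu)$-type estimate for the smoothed signed density, which by spectral decomposition equals $\sum_{i\ge 1}(\mu_t^B(e_i)\e^{-\vv B(\lambda_i)}-\mu(e_i))^2/\lambda_i$. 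Taking $\E^\mu$, invoking the same spectral identity as in Part (1), and letting $\vv\downarrow 0$ at a suitable rate after $t\to\infty$ delivers the dominant contribution $2/(t\lambda_i B(\lambda_i))$ per mode; ultra-contractivity of $(P_t)_{t>0}$ then absorbs the initial-distribution bias uniformly in $\nu\in\Pp$. The main obstacle will be Part (2)(i), where the Gaussian heat-kernel bounds, the spectral estimate from $B\in\mathbf{B}_\aa$, and the Paley--Zygmund step must be coordinated and the explicit exponents in $k$ and $t$ only appear after a sharp optimisation of the ball scale $r$; Part (2)(ii) is delicate in a different way, since the formal Otto-type lower bound must be made rigorous uniformly in the spectral index $i$ so that summation over $i\ge 1$ preserves the sharp constant $2$ in \eqref{LB4}.
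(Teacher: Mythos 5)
Your Part (1) follows essentially the same route as the paper: Kantorovich duality for $\tilde{\W}_1$, the variance identity
$t^{-1}\E^\mu[|\int_0^t f(X_s^B)\,\d s|^2]=4\int_0^{t/2}(1-2s/t)\mu((P_s^Bf)^2)\,\d s$,
and the observation that the right side tends to $4\int_0^\infty\mu((P_s^Bf)^2)\,\d s>0$. Your account of \eqref{LB2} is compressed but on track; be aware the paper cannot stop at ``Poincar\'e kills the bias'' because a general $\nu\in\Pp$ need not have an $L^2(\mu)$ density, so it first proves the $\liminf$ for $\nu$ with $h_\nu\in L^2(\mu)$ and then passes to arbitrary $\nu$ via the shifted empirical measure $\bar\mu_t^B=\frac1t\int_1^{t+1}\delta_{X_s^B}\,\d s$, the bound $\tilde\W_1(\mu_t^B,\bar\mu_t^B)\le 1/t$, and Fatou over $x\mapsto\nu_x:=p_1^B(x,\cdot)\mu$.

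Part (2)(i) is where your proposal diverges most and where I see a genuine gap. The paper does not use a ball-partition / Paley--Zygmund discrepancy argument. Instead, it discretizes the trajectory into $N$ points to form $\tilde\mu_N^B=\frac1N\sum_{i=1}^N\delta_{X_{t_i}^B}$, bounds $\E^\mu[\tilde\W_p(\mu_t^B,\tilde\mu_N^B)]\le c(t/N)^{p/(2\alpha)}$ using $\mu(|\nabla V|)<\infty$ (to get $\E^\mu[(\rho(X_0,X_u)\wedge1)^2]\lesssim u$), H\"older, and the subordinator moment bound $\E[(S_u^B)^{p/2}]\lesssim u^{p/(2\alpha)}$ coming from $B\in\mathbf{B}_\alpha$, and then applies the quantization lower bound $\tilde\W_p(\tilde\mu_N^B,\mu)\ge cN^{-p/d}$ (Kloeckner; valid once \eqref{LRIC} gives $\mu(\tilde B(x,r))\lesssim r^d$). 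Optimizing in $N$ produces the stated exponents. In your version, the claimed variance $r^{2d-2\alpha}/t$ per bump and the required anti-concentration via Paley--Zygmund are not justified and, as stated, do not obviously deliver the $k^{-2\alpha/(p(d-2\alpha))}t^{-1/(d-2\alpha)}$ rate; the Green-function scaling for the subordinated process and the fourth-moment control needed for Paley--Zygmund would have to be carried out carefully and it is far from clear the exponents land correctly, whereas the quantization route avoids all of this.

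Part (2)(ii) also has a real gap. You propose to smooth by $P_\vv^B$ and to use the ``cost estimate'' $\W_2(\mu_t^B,\mu_t^BP_\vv^B)^2\le\delta(\vv)$ plus a triangle inequality. This is the wrong direction: the triangle inequality leaves an error term $\delta(\vv)^{1/2}$ that does not vanish fast enough to recover the sharp constant $\sum_i 2/(\lambda_iB(\lambda_i))$, and $\delta(\vv)$ is in any case defined via the diffusion, not the subordinated process. The paper's argument is cleaner and relies on two ingredients: (a) the spectral lower bound for the \emph{diffusion-smoothed} measure $\mu_{\vv,t}^B=\mu_t^BP_\vv$, namely $\liminf_{t\to\infty}\{t\inf_x\E^x[\W_2(\mu_{\vv,t}^B,\mu)^2]\}\ge\sum_i 2/(\lambda_iB(\lambda_i)\e^{2\vv\lambda_i})$, quoted from \cite[Theorem 2.1(1)]{WangWu} (this is where ultra-contractivity of $(P_t)_{t>0}$ enters, to make the bound uniform in $x$); and (b) the $\W_2$-contraction $\W_2(\mu_t^BP_\vv,\mu)\le\e^{-\vv K}\W_2(\mu_t^B,\mu)$, valid under $\Ric-\Hess_V\ge K$ with $\partial M$ empty or convex \cite[Theorem 3.3.2]{Wang2014}. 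Combining (a) and (b) and letting $\vv\downarrow0$ gives \eqref{LB4} with no error term at all. Your Bakry--\'Emery gradient estimate is morally the right object, but it must be converted into the Wasserstein contraction for $P_\vv$ (not $P_\vv^B$, for which no such clean contraction is available under $\Ric-\Hess_V\ge K$), and the ``Otto-type lower bound'' step you gesture at is precisely the nontrivial content you would need to either prove or cite from \cite{WangWu}.
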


We point out that, when $B(t)=t$ for all $t\geq0$, the main results above go back to the situation in \cite{Wang20202}. However, it is remarked that,  in this particular case, even by further assuming $\partial M$ is empty and $\textnormal{Ric}-\textnormal{Hess}_V\ge K$ for some constant $K\in\R$, it is difficult to prove that
$$\limsup_{t\to\infty}\left\{t\E^\nu[\W_2(\mu_t^B,\mu)^2]\right\}\le \sum_{i=1}^\infty\ff 2 {\ll_iB(\ll_i)},\quad\nu\in\Pp.$$
See Remark 1.1 in the aforementioned paper for more details.

We should mention that the fundamental idea of proofs for the main results is motivated by recent works \cite{Wang20202} and \cite{WangWu}. However, in order to deal with the present case, we need to develop some new techniques.

The Riemannian structure is a convenient setting for this work; however, the approach covers more general situations. We end this section with the following remark on potential extensions of the above results to metric measure spaces.
\begin{remark}
Let $K\in\R$ and $N\in(1,\infty)$. By the same approach presented in Sections 4 and 5 below, results in Theorem \ref{TH2.1} and Theorem \ref{TH3.1} can be established similarly on a large class of  not necessarily smooth metric probability measure spaces, namely, ${\rm RCD}^\ast(K,N)$ spaces with  the reference measure being a probability measure. See e.g. \cite{AGS2014,Gigli2015,EKS2015,JLZ2016,Li2013} for the definition of the ${\rm RCD}^\ast(K,N)$ space and necessary details needed, e.g. properties on the heat flow. As for our present setting, if $\partial M$ is empty or convex, and $V$ belongs to $C^2(M)$ such that $L|\nabla f|^2-2\langle\nabla L f,\nabla f\rangle\geq 2K|\nabla f|^2+2(L f)^2/N$ for every $f\in C^\infty(M)$, then $(M,\rho,\mu)$ is an ${\rm RCD}^\ast(K,N)$ space.
\end{remark}

\section{Example}\hskip\parindent
In order to illustrate the results in Theorems \ref{TH2.1} and \ref{TH3.1},  it is necessary to give the following concrete example on $\R^d$; see \cite[Example 1.4]{Wang20202}.
\begin{example}[$M=\R^d$]\label{EXP4.1}
Let $\kappa>0$ and $q>1$. Consider $V(x)=-\kappa|x|^q+U(x)$, $x\in\R^d$, for some function  $U\in C^1(\R^d)$
with $\|\nn U\|_\infty<\infty$.
\begin{itemize}
\item[(1)] Let $B\in \mathbf{B}^\aa$ for some $\aa\in[0,1]$. Then there exists a constant $c>0$ such that, for any $t,k\ge 1$,
\begin{equation}\label{EXPUB1}
\sup_{\nu\in\Pp_k}\E^\nu[\W_2(\mu_t^B,\mu)^2]\le
\begin{cases}
ckt^{-\ff {2(q-1)}{(d-2\aa)q+2\aa}},\quad &\mbox{if}~2(1+\aa)(q-1)<dq,\\
ckt^{-1}\log(1+t),\quad  &\mbox{if}~2(1+\aa)(q-1)=dq,\\
ckt^{-1},\quad  &\mbox{if}~2(1+\aa)(q-1)>dq.\\
\end{cases}
\end{equation}

\item[(2)] Let $B\in \mathbf{B}_\aa$ for some $\aa\in[0,1]$. For any $\nu\in\Pp$, there exists a constant $c>0$ such that, for large enough $t>0$,
\begin{equation*}\label{EXPLB2}\E^\nu[\W_2(\mu_t^B,\mu)^2]\ge\E^\nu[\tilde{\W}_1(\mu_t^B,\mu)^2]\ge ct^{-\ff 2 {2\vee (d-2\aa)}}.\end{equation*}
\end{itemize}
\end{example}

We remark that, results in Example \ref{EXP4.1} are sharp in the following sense. Let $\nu\in\Pp_k$. If $dq<2(1+\alpha)(q-1)$, then both the upper and the lower bounds of $\E^\nu[\W_2(\mu_t^B,\mu)^2]$ behave as $t^{-1}$. If $dq>2(1+\alpha)(q-1)$, then the lower bound of $\E^\nu[\W_2(\mu_t^B,\mu)^2]$ behaves as $t^{-\frac{2}{d-2\alpha}}$, and the upper bound of $\E^\nu[\W_2(\mu_t^B,\mu)^2]$ behaves as $t^{-\ff {2(q-1)}{(d-2\aa)q+2\aa}}$ which converges to $t^{-\frac{2}{d-2\alpha}}$ as $q\rightarrow\infty$. Refer to \cite{WangWu} for sharp results in the case when $M$ is compact and $B$ satisfies \eqref{B-int} in addition.

\section{Proofs of Theorem \ref{TH2.1} and Corollary \ref{COR2.2}}\hskip\parindent
In this section, we aim to prove Theorem \ref{TH2.1} and Corollary \ref{COR2.2}. At first, let us give a brief description of the  idea of
proof.  Let $t,\varepsilon>0$.  The key step is to construct a regularized (or smoothed) version of $\mu_t^B$, denoted by $\mu_{\vv,t}^B$, such that it is extremely close to $\mu_t^B$  under the  Wasserstein distance as $\vv$ is small enough. Indeed, the regularized measure $\mu_{\vv,t}^B$ is obtained from $\mu$ through the heat flow, namely, $\mu_{\vv,t}^B=\mu_t^B P_\vv$. Then, to estimate $\E[\W_2(\mu_t^B,\mu)^2]$, by the triangle inequality, we have
\begin{equation}\label{TRI}
\E[\W_2(\mu_t^B,\mu)^2]\le(1+\delta)\E[\W_2(\mu_{\vv,t}^B,\mu)^2]+(1+\delta^{-1})
\E[\W_2(\mu_t^B,\mu_{\vv,t}^B)^2],\quad\delta>0.
\end{equation}
So we only need to investigate the two terms on the right-hand of \eqref{TRI}. It is worth  pointing out that the main purpose of regularization
is to employ the following inequality proved in \cite[Theorem 2]{Ledoux1} (see also \cite[Proposition 2.3]{AST2019} or \cite[Theorem A.1]{Wang20202}), i.e.,
\begin{equation}\label{WUP}
\W_2(f\mu,\mu)^2\le 4\mu\big(|\nn(-L)^{-1}(f-1)|^2\big),\quad f\ge 0,\,\mu(f)=0,\,\mu\in\mathscr{P}.
\end{equation}
Then, by \eqref{WUP}, we can get the upper estimate of $\E[\W_2(\mu_{\vv,t}^B,\mu)^2]$. As for $\E[\W_2(\mu_t^B,\mu_{\vv,t}^B)^2]$,
we use an approximation strategy.

Let $\{\phi_i\}_{i\in\mathbb{N}}$ be the sequence of orthonormal eigenfunctions corresponding to $\{\ll_i\}_{i\in\mathbb{N}}$ (satisfying the Neumann boundary condition if $\partial M\neq\emptyset$). It is well known that the heat kernel $(p_t)_{t>0}$ of the diffusion semigroup $(P_t)_{t>0}$ has the following representation formula, i.e.,
\begin{equation}\label{HK}
p_t(x,y)=1+\sum_{i=1}^\infty \e^{-\ll_i t}\phi_i(x)\phi_i(y),\quad t>0,\,x,y\in M.
\end{equation}
Let $\vv,t>0$. We define
\begin{equation*}\label{NR}
f_{\vv,t}^B:=\ff{\d \mu_{\vv,t}^B}{\d \mu}.
\end{equation*}
Then, letting
$$\xi_i^B(t):=\ff 1 t\int_0^t\phi_i(X_s^B)\,\d s,$$
by \eqref{HK}, we have
\begin{equation}\label{NRSR}
f_{\vv,t}^B=\ff 1 t \int_0^t p_\vv(X_s^B,\cdot)\,\d s=1+\sum_{i=1}^\infty\e^{-\ll_i \vv}\xi_i^B(t)\phi_i.
\end{equation}

We should point out that, since $(P_t)_{t\geq0}$ is $\mu$-invariant, so is $(P_t^B)_{t\geq0}$; see e.g. \cite{SSV2012}. Indeed, for every $f\in L^1(\mu)$, by the mutual independence of $(X_t)_{t\geq0}$ and $(S_t^B)_{t\geq0}$ and Fubini's theorem,
\begin{equation}\begin{split}\label{invariant}
\mu(P_t^B f)&=\int_M \Big(\int_{[0,\infty)} P_sf \,\P_{S_t^B}(\d s)\Big)\,\d\mu=\int_{[0,\infty)} \mu(P_sf) \,\P_{S_t^B}(\d s)\\
&=\int_{[0,\infty)} \mu(f) \,\P_{S_t^B}(\d s)=\mu(f),\quad t\geq0.
\end{split}\end{equation}

Now we are ready to prove Theorem \ref{TH2.1}.
\begin{proof}[Proof of Theorem \ref{TH2.1}]
We divide the proof for \eqref{UB1}, \eqref{UB2}, \eqref{UUB1} and \eqref{UUB2} into four parts.

(1) We may assume that $\sum_{i=1}^\infty\big(\ll_i B(\ll_i)\big)^{-1}<\infty$. By \cite[(2.6)]{WangWu},
 which still holds in the present noncompact setting by a slight modification of the original proof using assumption \eqref{DHU},
 there exists a constant $c>0$ such that, for every $t\geq1$ and every $\varepsilon>0$,
$$\sup_{\nu\in\Pp_k}\Big|t\E^\nu\big[\mu\big(|\nn(-L)^{-1}(f_{\vv,t}^B-1)|^2\big)\big]-\sum_{i=1}^\infty\ff 2{\ll_i B(\ll_i)\e^{2\vv\ll_i}}\Big|
\le\ff{ck}{t}\sum_{i=1}^\infty\ff 1 {\ll_i B(\ll_i)\e^{2\vv\ll_i}}.$$
Then, applying \eqref{WUP}  with $f_{\vv,t}^B$ instead of $f$, we immediately  obtain the estimate
\begin{equation}\label{TH2.1(1)}
t\sup_{\nu\in \Pp_k}\E^\nu[\W_2(\mu_{\vv,t}^B,\mu)^2]\le \sum_{i=1}^\infty\ff 8 {\ll_i B(\ll_i)}+\ff{ck}{t}\sum_{i=1}^\infty\ff{4}{\ll_i B(\ll_i)},\quad t\geq1,\,\vv>0.
\end{equation}

Let $n\in\mathbb{N}$. To estimate the error term $\E^\nu[\W_2(\mu_t^B,\mu_{\vv,t}^B)^2]$, we consider
the truncated Wasserstein distance
$$\W_{2,n}(\mu_1,\mu_2):=\inf_{\pi\in \C(\mu_1,\mu_2)}\left(\int_{M\times M}\big[n\wedge \rho(x,y)^2\big]\,\pi(\d x,\d y)\right)^{\ff 1 2},\quad \mu_1,\mu_2\in\Pp.  $$
For every $t>0$, since $(\mu_t^B P_\varepsilon)_{\varepsilon>0}$ converges weakly to $\mu_t^B$ as $\varepsilon\downarrow0$, we have
$$\limsup_{\vv\downarrow 0}\W_{2,n}(\mu_{\vv,t}^B,\mu_t^B)^2=0; $$
see also the proof of \cite[Theorem 1.1]{Wang20202} on page 276.
Combining this with the fact that $\W_{2,n}(\mu_{\vv,t}^B,\mu_t^B)\le n$ and $\E^\nu(\cdot)\leq k\E^\mu(\cdot)$ for every $\nu\in\Pp_k$, by Fatou's lemma,  we have, for every $t>0$,
$$\limsup_{\vv\downarrow 0}\sup_{\nu\in \Pp_k}\E^\nu[\W_{2,n}(\mu_{\vv,t}^B,\mu_t^B)^2]
\le k \limsup_{\vv\downarrow 0}\E^\mu[\W_{2,n}(\mu_{\vv,t}^B,\mu_t^B)^2]\leq0.$$
By \eqref{TH2.1(1)}, the triangle inequality for $\W_{2,n}$ and Fatou's lemma, we derive that, for every $\nu\in\Pp_k$,
\begin{equation*}\begin{split}\label{TH2.1(2)}
t\E^\nu[\W_{2,n}(\mu_t^B,\mu)^2]&\le t\limsup_{\vv\downarrow 0}\E^\nu\big[\W_{2,n}(\mu_t^B,\mu_{\vv,t}^B)+\W_{2,n}(\mu_{\vv,t}^B,\mu)\big]^2\\
&\le\sum_{i=1}^\infty\ff 8 {\ll_i B(\ll_i)}+\ff{ck}{t}\sum_{i=1}^\infty\ff 4 {\ll_i B(\ll_i)},
\quad  t\geq1,
\end{split}\end{equation*}
or
\begin{equation}\begin{split}\label{TH2.1(2)}
t\sup_{\nu\in\Pp_k}\E^\nu[\W_{2,n}(\mu_t^B,\mu)^2]\le \sum_{i=1}^\infty\ff 8 {\ll_i B(\ll_i)}+\ff{ck}{t}\sum_{i=1}^\infty\ff 4 {\ll_i B(\ll_i)},
\quad t\geq1.
\end{split}\end{equation}

Thus, by the monotone convergence theorem, we arrive at
\begin{eqnarray*}
&&t\sup_{\nu\in\Pp_k}\E^\nu[\W_{2}(\mu_t^B,\mu)^2]=t\sup_{\nu\in\Pp_k}\E^\nu[\sup_{n\geq1}\W_{2,n}(\mu_t^B,\mu)^2]\\
&=&t\sup_{\nu\in\Pp_k}\sup_{n\geq1}\E^\nu[\W_{2,n}(\mu_t^B,\mu)^2]=\sup_{n\geq1}\big\{t\sup_{\nu\in\Pp_k}\E^\nu[\W_{2,n}(\mu_t^B,\mu)^2]\big\}\\
&\leq&\sum_{i=1}^\infty\ff 8 {\ll_i B(\ll_i)}+\ff{ck}{t}\sum_{i=1}^\infty\ff 4 {\ll_i B(\ll_i)}<\infty,\quad t\geq1.
\end{eqnarray*}
Taking $t\rightarrow\infty$,  we immediately obtain the desired result \eqref{UB1}.

(2) Note that, for every $h\in L^2(\mu)$ with $\mu(h)=0$,
$$(-L)^{-1}h=\int_0^\infty P_s h\,\d s,$$
which clearly belongs to $\mathcal{D}(L)$. Then the integration-by-parts formula and the symmetry of $(P_s)_{s\geq0}$ in $L^2(\mu)$ lead to
$$\int_{M}\big|\nn (-L)^{-1}(f_{\vv,t}^B-1)\big|^2\,\d \mu=\int_{0}^\infty\d s\int_M\big|P_{\ff s 2}
f_{\vv,t}^B-1\big|^2\,\d \mu,
\quad t,\varepsilon>0.  $$
Combining \eqref{WUP}, \eqref{HK} and \eqref{NRSR}, by a careful calculation, we obtain
\begin{equation}\label{TH2.1(4)}
\W_2(\mu_{\vv,t}^B,\mu)^2\le 4\sum_{i=1}^\infty\frac{|\xi_i^B(t)|^2}{\ll_i\e^{2\ll_i\vv}},\quad t,\varepsilon>0.
\end{equation}

To show $\eqref{UB2}$, noting again that $\E^\nu(\cdot)\le k\E^\mu(\cdot)$ for every $\nu\in\Pp_k$, it is enough for us to prove that, there exists some constant $c>0$ such that
\begin{eqnarray}\label{UB2'}
\E^\mu[\W_2(\mu_t^B,\mu)^2]\le c\inf_{\vv\in(0,1]}\big[\dd(\vv)+t^{-1}\eta^\aa(\vv)\big],\quad t\geq1.
\end{eqnarray}

Due to  \eqref{TH2.1(4)}, in order to bound $\E^\mu[\W_2(\mu_{\vv,t}^B,\mu)^2]$, it is crucial to estimate $\E^\mu[|\xi_i^B(t)|^2]$.
Let $i\in\mathbb{N}$. One may use the fact that $(P_t^B)_{t\geq0}$ is $\mu$-invariant (see \eqref{invariant} above) and $\mu(\phi_i^2)=1$
to get the identity
\begin{equation*}\label{B-invant}
\E^\mu[\phi_i(X_{s}^B)^2]=\mu(\phi_i^2)=1,\quad s\geq0.
\end{equation*}
Next, the Markov property yields
\begin{equation*}\label{mp}
\E^x[\phi_i(X_{s_2}^B)|X_{s_1}^B]=P_{s_2-s_1}^B\phi_i(X_{s_1}^B)
=\e^{-B(\ll_i)(s_2-s_1)}\phi_i(X_{s_1}^B),\quad s_2\ge s_1\geq0,\,x\in M.
\end{equation*}
Hence, we conclude from the preceding  identities and the definition of $\xi_i^B(t)$ that (see e.g. \cite[(3.12)]{WangWu})
\begin{equation}\label{xi}\begin{split}
\E^\mu\big[|\xi_i^B(t)|^2\big]&=\ff 2 {t^2}\int_0^t \d s_1\int_{s_1}^t\E^\mu\big[\phi_i(X_{s_1}^B)\phi_i(X_{s_2}^B)\big]\,\d s_2\\
&=\ff 2 {t^2}\int_0^t\d s_1 \int_{s_1}^t\E^\mu\big[\phi_i(X_{s_1}^B)^2\big]\e^{-B(\ll_i)(s_2-s_1)}\,\d s_2\\
&\le\ff 2 {tB(\ll_i)},\quad t>0.
\end{split}\end{equation}

Note that $B\in\mathbf{B}^\aa$ for some $\aa\in[0,1]$. It is an elementary fact that
\begin{equation}\label{LB}
B(t)\geq \kappa(t\wedge t^\alpha),\quad t\geq0,
\end{equation}
for some constant $\kappa>0$.
Combing \eqref{LB} with \eqref{TH2.1(4)} and \eqref{xi}, we deduce that, there exist constants $c_1,c_2>0$ such that
\begin{equation}\begin{split}\label{TH2.1(5)}
\E^\mu[\W_2(\mu_{\vv,t}^B,\mu)^2]&\le\ff 8 t \sum_{i=1}^\infty \ff 1 {\ll_iB(\ll_i)}\e^{-2\ll_i\vv}
\le \ff {c_1} t \sum_{i=1}^\infty \ff 1 {\ll_i^{1+\aa}}\e^{-2\ll_i\vv}\\
&=\ff {c_1} t \ff 1 {\Gamma(\aa)}\int_{0}^\infty \sum_{i=1}^\infty\ff 1 {\ll_i}\e^{-(s+2\vv)\ll_i}s^{\aa-1}\,\d s\\
&\le\ff {c_2} t \sum_{i=1}^\infty\int_0^\infty\Big(\int_{\ff {s+2\vv} 2}^\infty \e^{-2\ll_i u}s^{\aa-1}\,\d u\Big)\, \d s,
\quad t,\varepsilon>0,
\end{split}\end{equation}
where the equality is due to the fact that
\begin{equation}\label{frac-eq}
\ff 1 {\ll^\aa}=\ff 1 {\Gamma(\aa)}\int_0^\infty \e^{-s\ll}s^{\aa-1}\,\d s,\quad\alpha,\lambda>0,
\end{equation}
and $\Gamma(\cdot)$ denotes the Gamma function. Combining \eqref{SG} with properties of the heat kernel $(p_t)_{t>0}$, we have
\begin{equation*}\begin{split}
p_{2t}(x,x)-1&=\int_M|p_t(x,y)-1|^2\,\mu(\d y)=\int_{M}|P_{\ff t 2}[p_{\ff t 2}(x,\cdot)](y)-1|^2\,\mu(\d y)\\
&\le\e^{-\ll_1 t}\int_M|p_{\ff t 2}(x,y)-1|^2\,\mu(\d y)=\e^{-\ll_1 t}[p_t(x,x)-1],\quad t>0.
\end{split}\end{equation*}
Then, together with  \eqref{HK}, we obtain
\begin{equation*}\begin{split}
\sum_{i=1}^\infty\e^{-2\ll_i t}&=\int_M[p_{2t}(x,x)-1]\,\mu(\d x)\\
&\le\e^{-\ll_1 t}\int_M [p_t(x,x)-1]\,\mu(\d x)\\
&\le\e^{-\ll_1 t}\gg(t),\quad t>0.
\end{split}\end{equation*}
Substituting this into \eqref{TH2.1(5)}, we deduce that,  there exists a constant $c_3>0$ such that, for ever $t>0$ and every $\vv\in(0,1]$,
\begin{equation}\begin{split}\label{TH2.1(6)}
\E^\mu[\W_2(\mu_{\vv,t}^B,\mu)^2]&\le\ff {c_2} t\int_0^\infty\Big(\int_{\ff{s+2\vv}2}^\infty\e^{-\ll_1 u}\gg(u)s^{\aa-1}\,\d u\Big)\,\d s\\
&=\ff {c_2} t  \int_\vv^\infty\Big(\int_0^{2u-2\vv}\e^{-\ll_1 u}\gg(u)s^{\aa-1}\,\d s\Big)\,\d u\\
&=\ff {c_2} t \int_\vv^1\Big(\int_0^{2u-2\vv}\e^{-\ll_1 u}\gg(u)s^{\aa-1}\,\d s\Big)\,\d u\\
&\quad+\ff {c_2} t \int_1^\infty\Big(\int_0^{2u-2\vv}\e^{-\ll_1 u}\gg(u)s^{\aa-1}\,\d s\Big)\,\d u\\
&\le\ff {c_2} t \int_\vv^1\gg(u)\Big(\int_0^{2u}s^{\aa-1}\,\d s\Big)\,\d u\\
&\quad+\ff{c_2} t \int_1^\infty\e^{-\ll_1 u}\gamma(1)\Big(\int_0^{2u}s^{\aa-1}\,\d s\Big)\,\d u\\
&\le \ff {c_3} t \left(1+\int_\vv^1 \gg(u)u^\aa \,\d u\right)=\ff {c_3} t\eta^\aa(\vv).
\end{split}\end{equation}

On the other hand, it is easy to verify that, for every $t,\varepsilon>0$,
$$\pi(\d x,\d y):=\ff 1 t\int_0^t \big\{\delta_{X_s^B}(\d x)\,p_{\vv}(X_s^B,y)\,\mu(\d y)\big\}\,\d s\in\C(\mu_t^B,\mu_{\vv,t}^B).$$
Then, by this and  the $\mu$-invariance of $(P_t^B)_{t\geq0}$ again,  we have
\begin{equation}\label{TH2.1(6+)}\begin{split}
\E^{\mu}[\W_2(\mu_t^B,\mu_{\vv,t}^B)^2]&\le\ff 1 t\E^\mu\left[\int_0^t\d s\int_M\rho(X_s^B,y)^2 p_\vv(X_s^B,y)\,\mu(\d y)\right]\\
&=\ff 1 t\int_0^t\mu\left[P_s^B\left(\int_M p_\vv(\cdot,y)\rho(\cdot,y)^2\,\mu(\d y)\right)\right]\,\d s\\
&=\E^\mu[\rho(X_0,X_\vv)^2]=\dd(\vv),\quad t,\varepsilon>0.
\end{split}\end{equation}

By the triangle inequality for $\W_2$ (see \eqref{TRI}), we thus conclude from  \eqref{TH2.1(6)} and \eqref{TH2.1(6+)} that
$$\E^\mu[\W_2(\mu_t^B,\mu)^2]\le 2\inf_{\vv\in(0,1]}\big[\dd(\vv)+c_3t^{-1}\eta^\aa(\vv)\big].$$
We complete the proof of \eqref{UB2'}, and hence \eqref{UB2}.

(3) Now we turn to prove \eqref{UUB1}. Without loss of generality, assume that $\sum_{i=1}^\infty\big(\ll_i B(\ll_i)\big)^{-1}<\infty$. Let $\vv>0$. By the ultra-contractivity of $(P_t^B)_{t>0}$,
it is obvious to see that
$$\zeta^B(\vv):=\sup_{t\ge \vv,\,x,y\in M}p_t^B(x,y)<\infty.$$
Let $\nu\in\Pp$ be the initial distribution of $(X_t^B)_{t\geq0}$, and let $\nu_\vv^B$ denote the distribution of $X_\vv^B$. Then $\nu\in \Pp_{\zeta^B(\vv)}$ (see \eqref{Pp_a} above for its definition). Let
$$\bar{\mu}_{\vv,t}^B:=\ff 1 t \int_{\vv}^{t+\vv}\delta_{X_s^B}\,\d s,\quad t>0.$$
It is standard to deduce from the Markov property that
$$\E^\nu[\W_2(\bar{\mu}_{\vv,t}^B,\mu)^2]=\E^{\nu_\vv^B}[\W_2(\mu_t^B,\mu)^2],\quad t>0.$$
Then, by  \eqref{TH2.1(2)}, we have
\begin{equation}\label{TH2.1(3)}
\limsup_{t\to\infty}\big\{t\E^\nu[\W_2(\bar{\mu}_{\vv,t}^B,\mu)^2]\big\}=\limsup_{t\to\infty}
\big\{t\E^{\nu_\vv^B}[\W_2(\mu_t^B,\mu)^2]\big\}\le\sum_{i=1}^\infty\ff 8 {\ll_i B(\ll_i)}.
\end{equation}
By the definition of coupling, for every  $t\geq\varepsilon$,
$$\pi_0:=\ff 1 t\int_0^\vv \dd_{(X_s^B,X_{s+t}^B)}\,\d s+\ff 1 t\int_{\vv}^t \dd_
{(X_s^B,X_s^B)}\,\d s\in\C(\mu_t^B,\bar{\mu}_{\vv,t}^B).$$
Since the conditional distribution of $X_{s+t}^B$ given $X_s^B$ is bounded by
$\zeta^B(1)\mu$ for every $t\ge 1$ and $s\geq0$, by Fubini's theorem,  we have
\begin{equation*}\begin{split}
t\E^\nu[\W_2(\mu_t^B,\bar{\mu}_{\vv,t}^B)^2]&\le t\E^\nu\Big[\int_{M\times M}\rho(x,y)^2\,\pi_0(\d x,\d y)\Big]\\
&=\int_0^\vv\E^\nu\big[\rho(X_s^B,X_{s+t}^B)^2\big]\,\d s\\
&\le \zeta^B(1)\int_0^\vv \E^\nu\big[\mu\big(\rho(X_s^B,\cdot)^2\big)\big]\,\d s=:r_\vv^B,\quad t\geq1\vee\varepsilon.
\end{split}\end{equation*}
Combining this with \eqref{CUUB1}, \eqref{TH2.1(3)} and \eqref{TRI}, due to the fact that $\lim_{\varepsilon\downarrow0}r_\vv^B=0$, we derive
\begin{equation*}\begin{split}
&\limsup_{t\to\infty}\big\{t\E^\nu[\W_2(\mu_t^B,\mu)^2]\big\}\\
&\le\lim_{\vv\downarrow 0}\Big(\big[1+(r_\vv^B)^{\ff 1 2}\big]\limsup_{t\to\infty}\big\{t\E^\nu[\W_2(\bar{\mu}_{\vv,t}^B,\mu)^2]\big\}
+\big[1+(r_{\vv}^B)^{-\ff 1 2}\big]r_\vv^B\Big)\\
&\le\sum_{i=1}^\infty\ff 8 {\ll_i B(\ll_i)},
\end{split}\end{equation*}
which finishes the proof of \eqref{UUB1}.

(4) We turn to prove \eqref{UUB2}. The proof is short and essentially the same as the one for \cite[(1.10)]{Wang20202}. We present it here for completeness. Since $(P_t^B)_{t>0}$ is ultra-contractive, it is clear that there exists a constant $c_4>0$ such that
\begin{equation}\label{TH2.1(7)}
\sup_{t\ge 1}p_t^B(x,y)\le c_4,\quad x,y\in M.
\end{equation}
Then $\ff{\d \nu_1^B}{\d \mu}\le c_4$, where $\nu_1^B$ is the distribution of $X_1^B$.  Let $t\geq1$. Define  $\bar{\mu}_t^B=\ff 1 t\int_0^t\dd_{X_{1+s}^B}\,\d s$. It is easy to see that
$$\pi:=\ff 1 t \int_0^1 \dd_{(X_s^B,X_{s+t}^B)}\,\d s+\ff 1 t \int_1^t \dd_{(X_s^B,X_s^B)}\,\d s\in\C(\mu_t^B,\bar{\mu}_t^B).$$
Applying \eqref{TH2.1(7)},  we obtain
\begin{equation*}\label{TH2.1(8)}\begin{split}
\E^\nu[\W_2(\mu_t^B,\bar{\mu}_t^B)^2]&\le\ff 1 t\E^\nu\Big[\int_0^1\rho(X_s^B,X_{s+t}^B)^2\,\d s\Big]\\
&\le\ff {c_4} t\E^\nu\Big[\int_0^1\mu\big(\rho(X_s^B,\cdot)^2\big)\,\d s\Big].
\end{split}\end{equation*}
By the Markov property and \eqref{UB2}, we can find a constant $c_5>0$ such that
$$\E^\nu[\W_2(\bar{\mu}_t^B,\mu)^2]=\E^{\nu_1^B}[\W_2(\mu_t^B,\mu)^2]\le c_5\inf_{\vv\in(0,1]}\big[\dd(\vv)+t^{-1}\eta^\aa(\vv)\big].$$
Thus, the triangle inequality for $\W_2$ leads to \eqref{UUB2} for some constant $C>0$.
\end{proof}

Now we turn to prove the corollary. The proof is based on \cite[Page 279]{Wang20202}. However, there is a gap in the original proof of \cite[(2.18)]{Wang20202} by careful study. So, it is necessary to give some details below to fix it.
\begin{proof}[Proof of Corollary \ref{COR2.2}] By \cite[(2.17)]{Wang20202} (which is derived from assumption \eqref{Ric}), we have
$$\gg(t)\le c_1 \tilde{\gg}(t),\quad t\in(0,2],$$
where $c_1>0$ is a constant. Hence, $\eta^\alpha(\varepsilon)\leq(1\vee c_1)\tilde{\eta}^\alpha(\varepsilon)$, $\varepsilon\in(0,1]$.

Let $\rho_x(\cdot)=\rho(x,\cdot)$, $x\in M$. Then, since ${\rm Ric}\geq -K_1$, the Laplacian comparison theorem (see e.g. \cite[Theorem 1.1.10]{Wang2014}) implies
$$\Delta \rho_x(y)^2\leq2d +2\sqrt{(d-1)K_1}\, \rho_x(y),\quad (x,y)\in\hat{M},$$
where $\hat{M}:=\{(x,y)\in M\times M: x\neq y,\,y\notin {\rm cut}(x)\}$ and ${\rm cut}(x)$ is the cut-locus of $x$. Then
\begin{equation*}\begin{split}
L \rho_x(y)^2&=\Delta\rho_x(y)^2+2\langle\nabla V(y),\nabla\rho_x(y)\rangle\rho_x(y)\\
&\leq c_2\big[ 1+|\nabla V|(y)\big]\rho_x(y),\quad (x,y)\in\hat{M},
\end{split}\end{equation*}
for some constant $c_2>0$, where we also used the fact that $|\nabla \rho_x|=1$ whenever $\rho_x(\cdot)$ is smooth. Combining this with It\^{o}'s formula for the radial process originally due to W.S. Kendall (see e.g. \cite[Chapter 2]{Wang2014}), we have
\begin{equation*}\begin{split}
&\d  \rho_{X_0}(X_t)^2 \leq \d\mathcal{N}_t + L\rho_{X_0}(X_t)^2\d t\\
&\leq \d\mathcal{N}_t+ c_3\big[1+|\nabla V|^2(X_t)\big]\d t  + c_3 \rho_{X_0}(X_t)^2 \d t,\quad t\in[0,1],
\end{split}\end{equation*}
for some constant $c_3>0$, where $(\mathcal{N}_t)_{t\geq0}$ is a martingale on some filtered probability space. Hence, by the $\mu$-invariance of $(P_t)_{t\geq0}$, we obtain
$$\E^\mu\big[\rho_{X_0}(X_t)^2\big]\leq c_4\big[1+\mu(|\nabla V|^2)\big]t+ c_4\int_0^t\E^\mu\big[\rho_{X_0}(X_s)^2\big]\,\d s,\quad t\in[0,1],$$
for some constant $c_4>0$. Thus, Gr\"{o}nwall's inequality immediately leads to
\begin{equation*}\begin{split}
\delta(\varepsilon)&=\E^\mu\big[\rho(X_0,X_\varepsilon)^2\big]\leq c_4 \big[1+\mu(|\nabla V|^2)\big]\varepsilon \e^{c_4 \varepsilon}\\
&\leq c_5\big[1+\mu(|\nabla V|^2)\big]\varepsilon,\quad \varepsilon\in[0,1],
\end{split}\end{equation*}
for some constant $c_5>0$.

Let $k\geq1$ and $\nu\in\Pp_k$. Since $\E^\nu(\cdot)\leq k\E^\mu(\cdot)$, by applying  Theorem \ref{TH2.1}(ii),  we complete the proof of Corollary \ref{COR2.2}.
\end{proof}

\section{Proofs of Theorem \ref{TH3.1}}\hskip\parindent
In this section, we provide a proof for Theorem \ref{TH3.1}.
\begin{proof}[Proof of Theorem \ref{TH3.1}] We divide the proof into three parts.

(1) By the Markov property, the $\mu$-invariance of $(P_t^B)_{t\geq0}$, and the symmetry of $(P_t^B)_{t\geq0}$ in $L^2(\mu)$, we derive that,
for any $f\in L^2(\mu)\setminus\{0\}$,
\begin{equation*}\begin{split}
\E^\mu\big[f(X_{s_1}^B)f(X_{s_2}^B)\big]&=\E^\mu\big[f(X_{s_1}^B)P_{s_2-s_1}^B f(X_{s_1}^B)\big]\\
&=\mu\big[P_{s_1}^B(fP_{s_2-s_1}^B f)\big]=\mu(fP_{s_2-s_1}^B f)\\
&=\mu\big[(P_{\ff {s_2-s_1} 2}^B f)^2\big],\quad s_2\ge s_1\ge0.
\end{split}\end{equation*}
Hence
\begin{align*}
&\ff 1 t \E^\mu\left[\left|\int_0^t f(X_s^B)\,\d s\right|^2\right]=
\ff 2 t\int_0^t \d s_1\int_{s_1}^t\E^\mu\big[f(X_{s_1}^B) f(X_{s_2}^B)\big]\,\d s_2\\
&=\ff 2 {t}\int_0^t \d s_1\int_{s_1}^t \mu\big((P_{\ff{s_2-s_1} 2}^Bf)^2\big)\,\d s_2
=\ff 4 t \int_0^{t/2}\mu((P_s^B f)^2)\,\d s\int_s^{t-s}\d r\\
&= 4\int_0^{t/2}\Big(1-\frac{2s}{t}\Big)\mu\big((P_s^B f)^2\big)\,\d s,\quad t>0,
\end{align*}
where, in the third equality, we have used the  variables transformations, i.e., $s=\ff {s_2-s_1} 2,r=\ff {s_1+s_2} 2$; see \cite[Lemma 2.3]{CCG} and \cite[(3.2)]{Wang20202} for related results. Note that, as a function of $t$, the right hand side of the above identity is increasing. Letting $t\to\infty$, we have
\begin{equation}\label{TH3.1(1)}
\lim_{t\to\infty}\ff 1 t\E^\mu\left[\left|\int_0^t f(X_s^B)\,\d s\right|^2\right]=4\int_0^\infty\mu\big((P_s^B f)^2\big)\,\d s
\in(0,\infty].
\end{equation}
Taking $f\in L^2(\mu)\setminus\{0\}$ such that $\mu(f)=0$ and $\|f\|_\infty
\vee \|\nn f\|_\infty\le 1$, by the dual representation of the Wasserstein distance $\tilde{\W}_1$ (see e.g. \cite{Vi2003}), we get
\begin{equation}\label{dual-rep}
t\E^\mu[\tilde{\W}_1(\mu_t^B,\mu)^2]\ge \ff 1 t \E^\mu\left[\left|\int_0^t f(X_s^B)\,\d s\right|^2\right]
,\quad t>0.
\end{equation}
Combining \eqref{TH3.1(1)} and \eqref{dual-rep}, we immediately obtain the desired lower bound estimate \eqref{LB1} for some constant $c>0$.

Now assume  \eqref{SG}. Then, according to the mutual independence of $(X_t)_{t\geq0}$ and $(S_t^B)_{t\geq0}$, Minkowski's inequality and \eqref{BL},  we deduce that (see e.g. \cite{SSV2012})
\begin{equation}\begin{split}\label{Poin}
\|P_t^B f-\mu(f)\|_{L^2(\mu)}& \le \int_0^\infty \|P_s f-\mu(f)\|_{L^2(\mu)}\,\P_{S_t^B}(\d s)\\
&\le \int_0^\infty \e^{-\lambda_1 s}\|f-\mu(f)\|_{L^2(\mu)}\,\P_{S_t^B}(\d s)\\
&= \e^{-B(\ll_1) t} \|f-\mu(f)\|_{L^2(\mu)},\quad  t>0,\,f\in L^2(\mu).
\end{split}\end{equation}
Let $\nu\in \Pp$ such that $\nu=h_\nu\mu$ with $h_\nu\in L^2(\mu)$.  For every $f\in  L^2(\mu)\cap L^\infty(\mu)$  with $\mu(f)=0$, we have
\begin{equation}\label{lim0}\begin{split}
&\left|\ff 1 t \E^\nu\left[\left|\int_0^t f(X_s^B)\,\d s\right|^2\right]-
\frac{1}{t}\E^\mu\left[\left|\int_0^t f(X_s^B)\,\d s\right|^2\right]\right|\\
&=\ff 1 t\left|\int_M [h_\nu(x)-1] \E^x\left[\left|\int_0^ t f(X_s^B)\,\d s\right|^2\right]\,\mu(\d x)\right|\\
&=\ff 2 t\left|\int_0^t \d s_1 \int_{s_1}^t\mu\big([h_\nu-1]P_{s_1}^B[f P_{s_2-s_1}^B f]\big)\,\d s_2\right|\\
&=\ff 2 t \left|\int_0^t \d s_1 \int_{s_1}^t\mu\big([P_{s_1}^B(h_\nu-1)][f P_{s_2-s_1}^B f]\big)\,\d s_2\right|\\
&\leq\ff{2\|f\|_{L^\infty(\mu)}} t\int_0^ t \d s_1\int_{s_1}^t \|P_{s_1}^B(h_\nu-1)\|_{L^2(\mu)}
\|P_{s_2-s_1}^B f\|_{L^2(\mu)}\,\d s_2\\
&\leq \ff{2\|f\|_{L^\infty(\mu)}\|f\|_{L^2(\mu)}} t\big(\|h_\nu\|_{L^2(\mu)}+1\big)\int_0^t\d s_1\int_{s_1}^t \e^{-B(\lambda_1) s_2}\,\d s_2\\
&=2\|f\|_{L^\infty(\mu)}\|f\|_{L^2(\mu)}\big(\|h_\nu\|_{L^2(\mu)}+1\big)\frac{1-\e^{-B(\lambda_1)t}-B(\lambda_1)
\e^{-B(\lambda_1)t}t}{B(\lambda_1)^2t}\longrightarrow0,
\end{split}\end{equation}
as $t\rightarrow\infty$, where the third line follows from the Markov property, the fourth line is due to the symmetry of $(P_t^B)_{t\geq0}$ in $L^2(\mu)$, the fifth line is by the Cauchy--Schwarz inequality, and the sixth line is directly deduced from \eqref{Poin}. Taking $f\in L^2(\mu)\setminus\{0\}$ with $\mu(f)=0$ and $\|f\|_\infty\vee\|\nn f\|_\infty\le 1$, by \eqref{TH3.1(1)}, \eqref{dual-rep} and \eqref{lim0}, we obtain
\begin{equation}\begin{split}\label{TH3.1(2)}
\liminf_{t\to\infty} \left\{t\E^\nu[\tilde{\W}_1(\mu_t^B,\mu)^2]\right\}&\ge\liminf_{t\to\infty}
\left\{\ff 1 t \E^\nu\left[\left|\int_0^t f(X_s)\right|^2\right]\right\}\\
&=4\int_0^\infty\mu\big[(P_s^B f)^2\big]\,\d s>0.
\end{split}\end{equation}

Let $t\geq1$ and let $\bar{\mu}_t^B=\ff 1 t\int_1^{t+1}\dd_{X_s^B}\d s$. It is easy to see that
$$\pi:=\ff 1 t \int_0^1 \dd_{(X_s^B,X_{s+t}^B)}\,\d s+\ff 1 t \int_1^t \dd_{(X_s^B,X_s^B)}\,\d s\in\C(\mu_t^B,\bar{\mu}_t^B).$$
Hence
\begin{equation}\label{TH3.1(3)}
\tilde{\W}_1(\mu_t^B,\bar{\mu}_t^B)\le\int_{M\times M} \mathbbm{1}_{\{(u,v):u\neq v\}}(x,y)\,\pi(\d x,\d y)\leq\ff 1 t.
\end{equation}
Let $x\in M$ and  denote $\nu_x:=p_1^B(x,\cdot)\mu$.
Noting that $p_1^B(x,\cdot)\in L^2(\mu)$, by the
Markov property and \eqref{TH3.1(2)}, we have
$$\liminf_{t\to\infty}\big\{t\E^x[\tilde{\W}_1(\bar{\mu}_t^B,\mu)^2]\big\}=
\liminf_{t\to\infty}\big\{t\E^{\nu_x}[\tilde{\W}_1(\mu_t^B,\mu)^2]\big\}>0.$$
This together with \eqref{TH3.1(3)} and the triangle inequality for $\tilde{\W}_1$,  it is direct to have that
\begin{eqnarray*}
\liminf_{t\to\infty}\big\{t\E^x[\tilde{\W}_1(\mu_t^B,\mu)^2]\big\}
&\geq&\liminf_{t\to\infty}\Big\{\frac{t}{2} \E^x[\tilde{\W}_1(\bar{\mu}_t^B,\mu)^2]-t \E^x[\tilde{\W}_1(\bar{\mu}_t^B,\mu_t^B)^2]\Big\}\\
&\geq&\liminf_{t\to\infty}\Big\{\frac{t}{2} \E^x[\tilde{\W}_1(\bar{\mu}_t^B,\mu)^2]-\frac{1}{t}\Big\}>0.
\end{eqnarray*}
Thus, for every $\nu\in\Pp$, by Fatou's lemma, we have
\begin{equation*}\begin{split}
\liminf_{t\to\infty}\big\{t\E^\nu[\tilde{\W}_1(\mu_t^B,\mu)^2]\big\}&=\liminf_{t\to\infty}
\int_M t\E^x[\tilde{\W}_1(\mu_t^B,\mu)^2]  \,\nu(\d x)\\
&\ge \int_M \liminf_{t\to\infty}\big\{t\E^x[\tilde{\W}_1(\mu_t^B,\mu)^2]\big\}\,\nu(\d x)>0,
\end{split}\end{equation*}
which proves \eqref{LB2}.

(2) Let $t\ge 1$, $N\in \N$ and $0<p<\alpha\leq1$.  Let $B\in\mathbf{B}_\aa$.  Consider the empirical measure for the $B$-subordinated process $(X_t^B)_{t\geq0}$, i.e.,
$$\tilde{\mu}_N^B:=\ff 1 N \sum_{i=1}^N \dd_{X_{t_i}^B}=\ff 1 t\sum_{i=1}^N
\int_{t_i}^{t_{i+1}}\dd_{ X_{t_i}^B}\, \d s,$$
where $t_i:=\ff {(i-1)t} N,\,1\le i \le N$. It is clear that
$$\ff 1 t\sum_{i=1}^N \int_{t_i}^{t_{i+1}}\dd_{X_s^B}(\d x)\dd_{X_{t_i}^B}(\d y)\,\d s
\in \C(\mu_t^B,\tilde{\mu}_N^B).$$
Then
\begin{equation}\label{WpU}\tilde{\W}_p(\mu_t^B,\tilde{\mu}_N^B)\le\ff 1 t\sum_{i=1}^N\int_{t_i}^{t_{i+1}}
\big(\rho(X_s^B,X_{t_i}^B)\wedge 1\big)^p\,\d s.
\end{equation}

It is an elementary fact that, there exists a constant $\kappa>0$ such that
 $$B(t)\leq \kappa t^\alpha,\quad t\geq0.$$
By \eqref{BL}, we can find a constant $c_1>0$ such that
\begin{equation}\label{ment-S}\begin{split}
\E[(S_r^B)^p]&=\ff p {\Gamma(1-p)}\int_0^\infty(1-\e^{-r B(t)})t^{-p-1}\,\d t\cr
&\le\ff p {\Gamma(1-p)}\int_0^\infty (1-\e^{-\kappa rt^\aa})t^{-p-1}\,\d t\cr
&\le c_1 r^{\ff p \aa},\quad r\in[0,1].
\end{split}\end{equation}
By the proof of \cite[Theorem 1.3(2)]{Wang20202}, since $\mu(|\nabla V|)<\infty$, the following inequality
$$\E^\mu\big[ \big(\rho(X_0,X_t)\wedge 1\big)^2\big]\le c_2 t,\quad t\ge 0$$
holds for some constant $c_2>0$. Hence, by  H\"{o}lder's inequality, there exists a constant $c_3>0$ such that, for every $u\in[0,1]$,
\begin{equation}\label{DpU}\begin{split}
\E^\mu\big[ \big(\rho(X_0^B,X_u^B)\wedge 1\big)^p\big]&=\E^\mu\big[\big(\rho(X_0,X_{S_u^B})\wedge 1\big)^p\big]\cr
&\le c_2^{p/2}\E\big[(S_u^B)^{\ff p 2}\big]\le c_3 u^{\ff p {2\aa}},
\end{split}\end{equation}
where the last step follows from \eqref{ment-S} with $q=\ff p 2$.

Thus, by \eqref{WpU} and \eqref{DpU}, there exists a constant $c_4>0$ such that
\begin{equation}\label{TH3.1(4)}
\E^\mu[\tilde{\W}_p(\mu_t^B,\tilde{\mu}_N^B)]\le c_4(tN^{-1})^{\ff p {2\aa}}.
\end{equation}

Since \eqref{LRIC} holds, using the volume comparison theorem (see e.g. \cite[Proposition 3.5.9]{Wang2014} with $W=0$), we find a constant $c_5>0$ such that
$$\mu\big(\tilde{B}(x,r)\big)\le c_5 r^d,\quad x\in M,\,r\in(0,1],$$
where $\tilde{B}(x,r):=\{y\in M:\rho(x,y)\wedge 1\le r\}$. Note that this inequality holds for all $r>0$ since $\mu$ is a probability measure.
Then, according to \cite[Proposition 4.2]{K} (see also \cite[Corollary 12.14]{GL2000}),  we have
\begin{equation}\label{WpL}
\tilde{\W}_p(\tilde{\mu}_N^B,\mu)\ge c_6 N^{-\ff p d},
\end{equation}
for some constant $c_6>0$.

Thus, by the triangle inequality for $\tilde{\W}_p$, \eqref{TH3.1(4)} and \eqref{WpL} yield
\begin{align*}
\inf_{\nu\in\Pp_k}\E^\nu[\tilde{\W}_p(\mu_t^B,\mu)]&\ge\inf_{\nu\in\Pp_k}
\E^\nu[\tilde{\W}_p(\mu,\tilde{\mu}_N^B)]-\sup_{\nu\in\Pp_k}\E^\nu[\tilde{\W}_p
(\mu_t^B,\tilde{\mu}_N^B)]\\
&\ge c_6 N^{-\ff p d}-c_4 k (tN^{-1})^{\ff p {2\aa}}.
\end{align*}
Therefore, optimizing in $N\ge 1$, we obtain that there exist a constant $c_7>0$ such that
$$\inf_{\nu\in\Pp_k}\E^\nu[\tilde{\W}_p(\mu_t^B,\mu)]\ge c_7 k^{-\ff {2\aa}{d-2\aa}}t^{-\ff p {d-2\aa}},$$
which finishes the proof of \eqref{LB3} by the elementary fact that $\tilde{\W}_p\leq\tilde{\W}_1^p$ on $\Pp\times\Pp$ for every $p\in (0,1]$.

(3) According to \cite[Theorem 2.1(1)]{WangWu}, for any $\vv\in(0,1]$, we have
\begin{equation*}\label{TH3.1(5)}
\liminf_{t\to\infty}\big\{t\inf_{x\in M}\E^x[\W_2(\mu_{\vv,t}^B,\mu)^2]\big\}\ge
\sum_{i=1}^\infty \ff 2 {\ll_iB(\ll_i)\e^{2\vv\ll_i}}.
\end{equation*}
On the other hand, by \cite[Theorem 3.3.2]{Wang2014} (see also \cite{vRS2005} for the empty boundary case), since ${\rm Ric}-{\rm Hess}_V\ge K$ with $K\in\R$ and $\pp M$ is either empty or convex, it holds that
$$\W_2(\mu_{\vv,t}^B,\mu)^2=\W_2(\mu_t^B P_\varepsilon,\mu)^2\le \e^{-2\vv K}\W_2(\mu_t^B,\mu)^2,\quad \vv,t>0.$$
Thus, we have
$$\liminf_{t\to\infty}\big\{t\inf_{x\in M}\E^x[\W_2(\mu_t^B,\mu)^2]\big\}\ge \e^{2\vv K}
\sum_{i=1}^\infty\ff 2 {\ll_i B(\ll_i)\e^{2\vv \ll_i}},\quad \vv\in(0,1],$$
which immediately brings us \eqref{LB4} by letting $\vv\downarrow 0$.
\end{proof}

\section{Proofs of Example \ref{EXP4.1}}\hskip\parindent
In this section, we present the proof of Example \ref{EXP4.1}, which is achieved by adapting the proof of \cite[Example 1.4]{Wang20202} and applying our results in Corollary \ref{COR2.2} and Theorem \ref{TH3.1}.

\begin{proof}[Proofs of Example \ref{EXP4.1}]
Let $q>1$. It is easy to check that $\int_{\R^d}|x|^{2(q-1)}\e^{V(x)}\,\d x<\infty$, and hence $\mu(|\nabla V|^2)<\infty$.

(1) From the proof of \cite[Example 1.4]{Wang20202} on page 283, we see that \eqref{Ric} holds for some constant $K>0$. By  Corollary \ref{COR2.2}, it suffices to estimate  $\tilde{\eta}^\alpha$. By the proof of \cite[Example 1.4(1)]{Wang20202}, there exists a constant $c_1>0$ such that
$$\tilde{\gg}(u):=\int_{\R^d}\frac{1}{\mu\big(B(x,\sqrt{u})\big)}\,\mu(\d x)\le c_1 u^{-\ff {qd}{2(q-1)}},\quad u\in(0,1],$$
where $B(x,r):=\{y\in\R^d: |x-y|<r\}$, $x\in\R^d,\,r>0$. Then, we can find a constant $c_2>0$ such that,  for any $\vv\in(0,1]$,
\begin{equation*}\begin{split}
\tilde{\eta}^\alpha(\vv)&:=1+\int_\varepsilon^1\tilde{\gamma}(u)u^\alpha\,\d u\\
&\le 1+c_1\int_\vv^1 u^{-\ff{qd}{2(q-1)}}u^\aa\, \d u\\
&\le
\begin{cases}
c_2 \vv^{1+\aa-\ff{qd}{2(q-1)}},\quad &\mbox{if}~1+\aa<\ff{qd}{2(q-1)},\\
c_2 \log(1+\vv^{-1}),\quad &\mbox{if}~1+\aa=\ff{qd}{2(q-1)},\\
c_2,\quad &\mbox{if}~1+\aa>\ff{qd}{2(q-1)}.
\end{cases}
\end{split}\end{equation*}
Optimizing in $t>0$  separately, we obtain that
\begin{equation}\begin{split}\label{EXP4.1(1)}
\inf_{\vv\in(0,1]}\big\{\vv+t^{-1}\tilde{\eta}^\alpha(\vv)\big\}\le
\begin{cases}
ct^{-\ff{2(q-1)}{(d-2\aa)q+2\aa}},\quad &\mbox{if}~1+\aa<\ff{qd}{2(q-1)},\\
ct^{-1}\log(1+t),\quad &\mbox{if}~1+\aa=\ff{qd}{2(q-1)},\\
ct^{-1},\quad &\mbox{if}~1+\aa>\ff{qd}{2(q-1)},\\
\end{cases}
\end{split}\end{equation}
for some constant $c>0$. Therefore, Corollary \ref{COR2.2} implies \eqref{EXPUB1}.

(2) By the proof of \cite[Example 1.4(3)]{Wang20202} on page 284, the spectra gap inequality \eqref{SG} holds for some constant $\lambda_1>0$ and \eqref{LRIC} is true for some constant $K\geq0$.  Thus, the desired assertion follows from Theorem \ref{TH3.1}(2).
\end{proof}

Finally, we give a remark on the ultra-contractivity of the subordinated semigroup considered in the above example.
\begin{remark}
Let $q>2$ and $B\in \mathbf{B}^\aa$ for some $\aa\in(\ff q {2q-2},1]$. We claim that $(P_t^B)_{t>0}$ is ultra-contractive. Indeed, it is easy to see that,  there exists a constant $c>0$ such that
$$\int_1^\infty \ff {\d r}{B\big(r^{2-\ff 2 q}\big)}\le c\int_1^\infty \ff {\d r}{r^{\aa(2-\ff 2 q)}}<\infty.$$
Then, from the proof of \cite[Example 1.4(2)]{Wang20202}, we can find a constant $C>0$ such that
$$\|P_t\|_{L^1(\mu)\to L^\infty(\mu)}\le \exp\Big[C\big(1+t^{-\frac{q}{q-2}}\big)\Big],\quad t>0. $$
Thus, by \cite[Proposition 13]{SchWang}, the claim is proved.
\end{remark}

\subsection*{Acknowledgment}\hskip\parindent
The authors would like to acknowledge the referee for corrections and helpful comments, and thank Prof. Feng-Yu Wang for helpful conversations and Dr. Jie-Xiang Zhu for useful comments and corrections on the former edition of the paper. This work is supported by the National Natural Science Foundation of China (Grant No. 11831014).

\section*{Appendix}\hskip\parindent
 In the appendix, we prove Remark \ref{remark-1}. The proof may be familiar for experts. However, we present it here for completeness.
 \begin{proof} Firstly, for every $x\in M$, $t\mapsto p_t(x,x)$ is decreasing in $(0,\infty)$. Indeed, by the symmetry, the semigroup property and the contraction property, for every $0<s<t<\infty$,
\begin{equation}\label{A1}
p_t(x,x)=\|p_{\frac{t}{2}}(x,\cdot)\|_{L^2(\mu)}^2=\|P_{\frac{t-s}{2}}p_{\frac{s}{2}}(\cdot,x)\|_{L^2(\mu)}^2
\leq\|p_{\frac{s}{2}}(\cdot,x)\|_{L^2(\mu)}^2=p_s(x,x).
\tag{A1}\end{equation}

Secondly, for every $t\geq t_0/2$,
$$\lim_{N\rightarrow\infty}\sup_{\|f\|_{L^2(\mu)}\leq1}\|P_tf\mathbbm{1}_{\{|P_tf|\geq N\}}\|_{L^2(\mu)}=0.$$
Indeed, letting $A_{N}=\{|P_tf|> N\}$ for each $N\in\mathbb{N}$, every $f\in L^2(\mu)$ and every $t>0$, by Minkowski's inequality, the Cauchy--Schwarz inequality, Fubini's theorem and properties of $(p_t)_{t>0}$,  we have, for every $f\in L^2(\mu)$ and every $t>0$,
\begin{align*}
\|P_tf\mathbbm{1}_{\{|P_tf|\geq N\}}\|_{L^2(\mu)}&=\Big\{\int_M\Big(\int_Mf(y)p_t(x,y)\,\mu(\d y)\Big)^2\mathbbm{1}_{A_N}(x)\,\mu(\d x)\Big\}^{1/2}\\
&\leq\int_M\Big(\int_M f(y)^2p_t(x,y)^2\mathbbm{1}_{A_N}(x)\,\mu(\d x)\Big)^{1/2}\,\mu(\d y)\\
&\leq\Big\{\int_M\Big(\int_M p_t(x,y)^2\mathbbm{1}_{A_N}(x)\,\mu(\d x)\Big)\,\mu(\d y)\Big\}^{1/2}\|f\|_{L^2(\mu)}\\
&=\Big\{\int_M\Big(\int_M p_t(x,y)^2\,\mu(\d y)\Big)\mathbbm{1}_{A_N}(x)\,\mu(\d x)\Big\}^{1/2}\|f\|_{L^2(\mu)}\\
&=\Big(\int_Mp_{2t}(x,x)\mathbbm{1}_{A_N}(x)\,\mu(\d x)\Big)^{1/2}\|f\|_{L^2(\mu)}.
\end{align*}
Let $A_N^\ast=\{\sup_{\|f\|_{L^2(\mu)}\leq1}|P_t f|>N\}$ for every $N\in\mathbb{N}$ and every $t>0$. By the Cauchy--Schwarz inequality,
\begin{equation*}\begin{split}
&\int_M \sup_{\|f\|_{L^2(\mu)}\leq1}|P_t f|\,\d\mu=\int_M \sup_{\|f\|_{L^2(\mu)}\leq1}\Big|\int_M f(y)p_t(x,y)\,\mu(\d y)\Big|\,\mu(\d x)\\
&\leq\int_M \Big(\int_M p_t(x,y)^2\,\mu(\d y)\Big)^{1/2}\,\mu(\d x)\leq\sqrt{\gamma(2t)}<\infty,\quad t\geq t_0/2,
\end{split}\end{equation*}
which implies that $\sup_{\|f\|_{L^2(\mu)}\leq1}|P_t f|<\infty$~$\mu$-a.e., $t\geq t_0/2$. Then for every $t\geq t_0/2$, $\mathbbm{1}_{A_N^\ast}\rightarrow0$~$\mu$-a.e. as $N\rightarrow\infty$.  By Fatou's lemma, we obtain that
\begin{equation*}\begin{split}
&\lim_{N\rightarrow\infty}\sup_{\|f\|_{L^2(\mu)}\leq1}\|P_tf\mathbbm{1}_{\{|P_tf|\geq N\}}\|_{L^2(\mu)}\\
&\leq \limsup_{N\rightarrow\infty}\sup_{\|f\|_{L^2(\mu)}\leq1}\Big(\int_Mp_{2t}(x,x)\mathbbm{1}_{A_N}(x)\,\mu(\d x)\Big)^{1/2}\\
&\leq\limsup_{N\rightarrow\infty}\Big(\int_Mp_{2t}(x,x)\mathbbm{1}_{A_N^\ast}(x)\,\mu(\d x)\Big)^{1/2}\leq0,\quad t\geq t_0/2,
\end{split}\end{equation*}
where the first inequality in the last line is due to that $\sup_{\|f\|_{L^2(\mu)\leq1}}\mathbbm{1}_{A_N}\leq \mathbbm{1}_{A_N^\ast}$ for any $t>0$ since $A_N\subset A_N^\ast$ for every $f\in L^2(\mu)$ with $\|f\|_{L^2(\mu)}\leq1$ and every $t>0$.

Finally, due to \cite[Corollary 1.6.9 and Corollary 1.6.6]{Wang2014}, we deduce that the essential spectrum of $L$ is empty, which finishes the proof.
\end{proof}


\begin{thebibliography}{a23}
\bibitem{AGS2014} L. Ambrosio, N. Gigli, G. Savar\'e, \emph{Metric measure spaces with Riemannian Ricci curvature bounded from below}, Duke Math. J. 163(2014), 1405--1490.

\bibitem{AST2019}
L. Ambrosio, F. Stra, D. Trevisan, \emph{A PDE approach to a $2$-dimensional matching problem}, Probab. Theory Related Fields 173(2019), 433--477.

\bibitem{Bertoin97}
 J. Bertoin, \emph{Subordinators: examples and applications}, in: Lectures on Probability and Statistics, Ecole d'Et\'{e} de Probabilit\'{e}s de Saint-Flour XXVII-1997, Lect. Notes Math. 1717(1999), 1--91.

\bibitem{CCG}P. Cattiaux, D. Chafa\"{\i}, A. Guillin, \emph{Central limit theorems for additive functionals of ergodic Markov diffusion processes},
ALEA Lat. Am. J. Probab. Math. Stat. 9(2012), 337--382.

\bibitem{CKS87}E.A. Carlen, S. Kusuoka, D.W. Stroock, \emph{Upper bounds for symmetric Markov transition functions}, Ann. Inst. H. Poincar\'{e} Probab. Statist. 23(1987), no. 2, suppl., 245--287.

\bibitem{ChenMF2004} M.-F. Chen, \emph{From Markov Chains to Non-Equilibrium Particle Systems}, 2nd ed., World Sci, Singapore, 2004.

\bibitem{CouGri}T. Coulhon, A. Grigor'yan, \emph{On-diagonal lower bounds for heat kernels on non-compact manifolds and Markov chains}, Duke Math. J.
89(1997), 133--199.

\bibitem{Davies89}E.B. Davies, \emph{Heat kernels and spectral theory}, Cambridge Tracts in Mathematics, vol. 92,
Cambridge University Press, Cambridge, 1989.

\bibitem{EKS2015} M. Erbar, K. Kuwada, K.-T. Sturm, \emph{On the equivalence of the entropic curvature-dimension condition and Bochner's inequality
on metric measure spaces}, Invent. Math. 201(2015), 993--1071.

\bibitem{Gigli2015}N. Gigli, \emph{On the differential structure of metric measure spaces and applications}, Mem. Amer. Math. Soc. 236(2015), 91pp.

\bibitem{GW2001} F.-Z. Gong, F.-Y. Wang,\emph{ Heat kernel estimates with applications to compactness of manifolds}, Q. J. Math. 52(2001), 171--180.

\bibitem{GL2000}
S. Graf, H. Luschgy, \emph{Foundations of quantization for probability distributions}, Lecture Notes in Math. 1730, Springer, Berlin, 2000.

\bibitem{JLZ2016}R. Jiang, H. Li, H. Zhang, \emph{Heat Kernel Bounds on Metric Measure Spaces and Some Applications}, Potential Anal. 44(2016), 601--627.

\bibitem{K}B. Kloeckner, \emph{Approximation by finitely supported measures}, ESAIM Control Optim.
Calc. Var. 18(2012), 343--359.

\bibitem{KA}Andreas E. Kyprianou, \emph{Fluctuations of L\'{e}vy processes with applications}, 2nd ed., Universitext, Springer, Heidelberg, 2014.

\bibitem{Ledoux1}M. Ledoux, \emph{On optimal matching of Gaussian samples}, Zap. Nauchn. Sem. S.-Peterburg. Otdel. Mat. Inst. Steklov. 457(2017), 226--264.

\bibitem{Li2013}H. Li, \emph{Dimension-Free Harnack Inequalities on ${\rm RCD}(K, \infty)$ Spaces}, J. Theoret. Probab. 29(2016), 1280--1297.

\bibitem{LW1}H. Li, B. Wu, \emph{Wasserstein Convergence for Conditional Empirical Measures of Subordinated Dirichlet Diffusions on Riemannian Manifolds}, Preprint (2022), arXiv:2204.13559.

\bibitem{LW2}H. Li, B. Wu, \emph{Wasserstein Convergence for Empirical Measures of Subordinated Dirichlet Diffusions on Riemannian Manifolds}, Preprint (2022), arXiv:2206.03901.


\bibitem{Phi}
R.S. Phillips, \emph{On the generation of semigroups of linear operators}, Pacific J. Math. 2(1952), 343--369.

\bibitem{SSV2012}
R.L. Schilling, R. Song, Z. Vondra\u{c}ek, \emph{Bernstein functions: theory and
applications}, second edition, De Gruyter Studies in Mathematics, vol. 37, Walter de Gruyter \& Co., Berlin, 2012.

\bibitem{SchWang}
R.L. Schilling, J. Wang, \emph{Functional inequaliies and subordination: stability of Nash and Poincar\'{e} inequalities}, Math. Z. 272(2012), 921--936.

\bibitem{SW}
W. Schoutens, \emph{L\'{e}vy Processes in Finance: Pricing Financial Derivatives},  Wiley, New York, 2003.

\bibitem{Vi2003}
C. Villani, \emph{Topics in Optimal Transportation}, Graduate Studies in Mathematics, vol. 58,
American Mathematical Society, Providence, 2003.

\bibitem{vRS2005}
M.-K. von Renesse, K.-T. Sturm, \emph{Transport inequalities, gradient estimates, entropy and Ricci curvature}, Comm. Pure Appl.
Math. 58(2005), 923--940.

\bibitem{Wang2000}F.-Y. Wang, \emph{Functional inequalities, semigroup properties and spectrum estimates}, Infin. Dimens. Anal. Quantum Probab. Relat. Top. 3(2000), 263--295.

\bibitem{Wang2005}F.-Y. Wang, \emph{Functional Inequalities, Markov semigroups and Spectral Theory}, Science Press, Beijing, 2005.

\bibitem{Wang2014}F.-Y. Wang, \emph{Analysis for Diffusion Processes on Riemannian Manifolds}, World Scientific, Singapore, 2014.

\bibitem{Wang20201}F.-Y. Wang, \emph{Convergence in Wasserstein Distance for empirical measures of Dirichlet
diffusion processes on manifolds}, Preprint (2020), arXiv:2005.09290. To appear in J. Eur. Math. Soc..

\bibitem{Wang20202}F.-Y. Wang, \emph{Wasserstein convergence rate for empirical measures on noncompact manifolds}, Stochastic Process. Appl. 144(2022), 271--287.

\bibitem{Wang20211}F.-Y. Wang, \emph{Precise limit in Wasserstein distance for conditional empirical measures
of Dirichlet diffusion processes}, J. Funct. Anal. 280(2021), 108998.

\bibitem{Wang20212}F.-Y. Wang, \emph{Convergence in Wasserstein distance for empirical measures of semilinear SPDEs}, Preprint (2021), arXiv:2102.00361.

\bibitem{WangWu}
F.-Y. Wang, B. Wu, \emph{Wasserstein Convergence for Empirical Measures of Subordinated Diffusions on Riemannian Manifolds}, Preprint (2021), arXiv:2107.11568. To appear in Potential Anal., doi:10.1007/s11118-022-09989-6.

\bibitem{WangZhu}
F.-Y. Wang, J.-X. Zhu, \emph{Limit theorems in Wasserstein distance for empirical measures of diffusion processes on Riemannian manifolds}, Preprint (2019), arXiv:1906.0342v8.

\bibitem{Zol}V. M. Zolotarev, \emph{Metric distances in spaces of random variables and of their distributions},
Math. USSR Sbonik  30(1976), 371--401.
\end{thebibliography}
\end{document}